\documentclass[11pt]{amsart}
\usepackage[T1]{fontenc}
\usepackage[utf8]{inputenc}
\usepackage{amsmath,amssymb,amsthm,amsfonts,mathtools}
\usepackage{microtype}
\usepackage[margin=3cm]{geometry}
\usepackage[shortlabels]{enumitem}

\theoremstyle{plain}\newtheorem{theorem}{Theorem}
\theoremstyle{plain}\newtheorem{conjecture}[theorem]{Conjecture}
\theoremstyle{plain}\newtheorem{lemma}[theorem]{Lemma}
\theoremstyle{plain}
\theoremstyle{plain}
\theoremstyle{plain}\newtheorem{definition}[theorem]{Definition}
\theoremstyle{definition}\newtheorem{notation}[theorem]{Notation}
\numberwithin{theorem}{section}

\def\AD{\mathsf{AD}}
\def\ZF{\mathsf{ZF}}
\def\ZFC{\mathsf{ZFC}}
\def\DC{\mathsf{DC}}
\def\N{\mathbb{N}}
\def\R{\mathbb{R}}

\let\term\emph

\begin{document}

\title[Regressive functions on the hyperarithmetic degrees]{Martin's conjecture for regressive functions on the hyperarithmetic degrees}

\author{Patrick Lutz}
\address{Department of Mathematics, UC Berkeley}
\email{pglutz@berkeley.edu}

\begin{abstract}
We answer a question of Slaman and Steel by showing that a version of Martin's conjecture holds for all regressive functions on the hyperarithmetic degrees. A key step in our proof, which may have applications to other cases of Martin's conjecture, consists of showing that we can always reduce to the case of a continuous function. 
\end{abstract}

\maketitle

\section{Introduction}

Martin's conjecture is a proposed classification of the limit behavior of functions on the Turing degrees under strong set theoretic hypotheses (namely the Axiom of Determinacy). The full conjecture is still open, but several special cases have been proved. In particular, in \cite{slaman1988definable}, Slaman and Steel proved that Martin's conjecture holds for all ``regressive'' functions on the Turing degrees.

\begin{theorem}[$\ZF + \AD$; Slaman and Steel]
\label{thm-ss}
If $f \colon 2^\omega \to 2^\omega$ is a Turing-invariant function such that $f(x) \le_T x$ for all $x$ then either $f$ is constant on a cone or $f(x) \equiv_T x$ on a cone.
\end{theorem}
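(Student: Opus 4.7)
\emph{Plan.} My plan is to prove the dichotomy by combining Martin's cone theorem with a uniformization step and a coding argument. The first two steps are standard $\AD$-style applications; the real content is in ruling out the ``strictly intermediate'' case in which $f(x) <_T x$ on a cone but $f$ is not constant on a cone.

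First, I apply Martin's cone theorem to the Turing-invariant set $\{x : f(x) \equiv_T x\}$. If it contains a cone, we obtain the second alternative directly. Otherwise its complement contains a cone, so on a cone we have $f(x) <_T x$ strictly, and it remains to show that in this situation $f$ is constant on a cone.

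Second, I reduce to a \emph{uniform} version of $f$: a single Turing functional $\Phi$ and a base $b$ with $f(x) = \Phi^x$ for all $x \ge_T b$. Because $f(x) \le_T x$ on a cone, the function $x \mapsto \min\{e : \Phi_e^x = f(x)\}$ is well-defined on a cone and takes values in $\omega$; applying the cone theorem to $\omega$-valued functions on degrees pins this minimum to a fixed $e_0$ on a cone, and I set $\Phi = \Phi_{e_0}$.

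The heart of the argument is then to show that if $\Phi^x <_T x$ on a cone then $\Phi^x$ is in fact constant up to $\equiv_T$ on a cone. The strategy I would pursue is a coding argument in the spirit of Posner--Robinson or Kumabe--Slaman forcing: to compare two potential values $a_0 = \Phi^{x_0}$ and $a_1 = \Phi^{x_1}$, construct reals $z_0 \equiv_T z_1$ --- so that Turing invariance forces $\Phi^{z_0} \equiv_T \Phi^{z_1}$ --- while simultaneously arranging $\Phi^{z_i} \equiv_T a_i$, thereby concluding $a_0 \equiv_T a_1$. The main obstacle is precisely this coding step: $\Phi$ is fixed in advance, so the $\Phi$-value of any real we construct is not freely choosable, and one must exploit the strict inequality $\Phi^x <_T x$ in an essential way to get enough ``room'' inside $\Phi$ to match up $\Phi$-values across the constructed reals.
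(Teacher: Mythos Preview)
Your first two steps (the cone dichotomy and the uniformization to a single functional) are fine, though Slaman--Steel work on a pointed perfect tree rather than a cone and use a further thinning to reduce to the case where $f$ is either constant or \emph{injective} on the tree. The real gap is your third step. The strategy you sketch---given $a_i = \Phi^{x_i}$, build $z_0 \equiv_T z_1$ with $\Phi^{z_i} \equiv_T a_i$---is circular: by Turing invariance, $z_0 \equiv_T z_1$ already forces $\Phi^{z_0} \equiv_T \Phi^{z_1}$, so such $z_0, z_1$ can exist only when $a_0 \equiv_T a_1$, which is precisely what you are trying to establish. There is no construction here, only a restatement of the goal. You flag this as the ``main obstacle,'' but invoking Posner--Robinson or Kumabe--Slaman forcing does not help: those techniques let you control the \emph{join} of a constructed real with a generic, not the image of a constructed real under a fixed functional.

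The actual Slaman--Steel argument runs in the opposite direction and rests on an idea your proposal is missing entirely. Assuming $f$ is computable and injective on a pointed perfect tree $T$, they first prove a \emph{domination} lemma: for every $x \in [T]$, every function computable from $x$ is dominated by some function computable from $f(x)$. (If not, $x$ can use a non-dominated function to guess convergence times for $f(x)$-programs and diagonalize, producing some $y \equiv_T x$ in $[T]$ with $f(y) \not\le_T f(x)$, contradicting invariance.) With domination in hand, they then code the bits of $x$ into the \emph{relative growth rates} of two fast-growing $f(x)$-computable functions, which yields $f(x) \ge_T x$. Neither the domination step nor the growth-rate coding appears in your sketch, and without something playing the role of domination the argument cannot close.
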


They also asked whether the analogous theorem for hyperarithmetic reducibility holds. In other words, is it possible to prove a version of Martin's conjecture for regressive functions on the hyperarithmetic degrees? Their motivation was as follows. A regressive function on the Turing degrees can be written as a countable union of continuous functions. Their argument works by using this fact to reduce to the case where $f$ is continuous and then showing that if such an $f$ is not constant on any cone then for all $x$ in some cone, it is possible to find $y \equiv_T x$ such that $x$ is coded into $f(y)$.
In their coding argument, they relied strongly on the properties of continuous functions. In contrast with regressive functions on the Turing degrees, regressive functions on the hyperarithmetic degrees can only be written as countable unions of Borel functions. Thus Martin's conjecture for regressive functions on the hyperarithmetic degrees forms a natural test case to see whether their coding argument can be extended to deal with functions which are not continuous.

The main result of this paper is to answer their question in the affirmative. Namely we will prove the following theorem.

\begin{theorem}[$\ZF + \AD$]
\label{thm-main}
Let $f \colon 2^\omega \to 2^\omega$ be a hyp-invariant function such that $f(x) \le_H x$ for all $x$. Then either $f$ is constant on a cone of hyperarithmetic degrees or $f(x) \equiv_H x$ on a cone of hyperarithmetic degrees.
\end{theorem}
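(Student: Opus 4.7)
The plan mirrors Slaman--Steel's strategy for Theorem~\ref{thm-ss}: first reduce $f$, on a cone, to a function amenable to a direct coding argument, then run that argument. In the Turing case ``amenable'' means continuous, and regressivity gives this essentially for free. In our case regressivity only forces $f$ to be Borel on a cone, and the central difficulty---flagged in the abstract---is closing the gap from Borel to continuous.

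For stage one (Borel normalization), enumerate the hyp-reduction codes $c$ and their associated partial Borel reductions $\Phi_c$. Regressivity gives that every $x$ satisfies $f(x) = \Phi_c(x)$ for some $c$. Under $\ZF+\AD$, a Martin-style determinacy game shows that the hyp-cone filter is a $\sigma$-complete ultrafilter on hyp-invariant sets; applied to the $\omega$-valued, hyp-invariant map sending each degree to its least valid $c$, this yields a single $c_0$ and a base $a_0$ with $f = \Phi_{c_0}$ on the hyp-cone above $a_0$.

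For stage two (Borel to continuous), I would seek a real $z \ge_H a_0$ and a continuous function $g$ such that $f = g$ on the hyp-cone above $z$. The naive move---writing $\Phi_{c_0}(x) = U(\text{code}, x)$ for a universal Borel evaluator $U$---fails because $U$ is itself Borel rather than continuous. A promising substitute is to combine hyp-invariance of $f$ with a pointed-perfect-set or Baire-category style argument: passing to a suitable large subset of the cone and absorbing an appropriate parameter into every real in that set, a hyp-invariant Borel function should collapse to one continuous in $x$ alone. Then for stage three (coding), I would adapt the Slaman--Steel continuous-case argument directly: either $g$ is constant on a sub-cone (and we are done), or non-constantness together with continuity lets us extract incompatible strings $\sigma_0, \sigma_1$ on which $g$ separates into disjoint clopen sets, and iterating this produces, for each $x$ in the cone above $z$, a real $y$ with $y \equiv_T x \oplus z$ and $x \le_T g(y)$. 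Hyp-invariance of $g$ then gives $g(y) \equiv_H g(x) = f(x)$, so $x \le_H f(x)$; combined with regressivity, this yields $f(x) \equiv_H x$.

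The main obstacle is stage two. Stages one and three closely parallel Slaman--Steel, with Borel replacing computable reductions in stage one and with the final Turing equivalence relaxed to hyp in stage three; but stage two has no direct analogue in their proof and seems to require a genuinely new idea, consistent with the abstract's claim that this reduction may be useful for other instances of Martin's conjecture. A secondary concern is that the parameter $z$ from stage two forces the stage-three coding to be relativized, so care is needed to ensure the constructed $y$ stays inside the cone above $z$.
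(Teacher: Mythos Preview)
Your stage two misidentifies the target, and this is where the paper's key idea actually lives. The paper does \emph{not} arrange that $f$ itself becomes continuous on a cone; instead it finds a Turing functional $\Phi$ and a pointed perfect tree $T$ with $\Phi(x) \equiv_H f(x)$ for all $x \in [T]$---hyp-equivalence, not equality. Once you aim for this weaker conclusion the argument is short: apply a computable-uniformization principle (an easy consequence of $\AD$) to the relation $R(x,y) \iff x \ge_T y \land f(x) \equiv_H y$. Its domain is cofinal because for any $x$ there is $\alpha < \omega_1^x$ with $x^{(\alpha)} \ge_T f(x)$, and $f(x^{(\alpha)}) \equiv_H f(x)$ by hyp-invariance, so $(x^{(\alpha)}, f(x))\in R$. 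Your ``main obstacle'' dissolves once you stop insisting on literal equality with $f$.

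Your stage three has a genuine gap even granting a continuous injective $g$ on a pointed perfect tree $T$. The clopen-splitting coding you sketch yields only $x \le_T g(y) \oplus T$: to read the bits of $x$ back from $g(y)$ the decoder must know the splitting structure, which is encoded in $T$, and nothing forces $T \le_H f(x)$. So you cannot conclude $x \le_H f(x)$. (This is exactly the obstruction that, in the Turing case, forces Slaman--Steel to go through domination rather than naive clopen coding.) The paper supplies three ingredients you are missing: a diagonalization showing $\omega_1^{f(x)} = \omega_1^x$; a $\Sigma^1_1$-bounding argument producing a single $\alpha < \omega_1^{f(x)}$ with $f(y) \le_T f(x)^{(\alpha)}$ for every $y \in [T]$ with $y \equiv_T x$; and a coding scheme using three auxiliary reals $a,b,c \equiv_H x$ in which bits of $x$ are encoded into the relative sizes of the least indices computing $f(a_n), f(b_n)$ from $f(x)^{(\alpha)}$, with the third real rotating as a ``helper'' so that the decoder recovers those indices from $f(x)^{(\alpha+2)} \oplus f(a) \oplus f(b) \oplus f(c)$ alone---all hyp-equivalent to $f(x)$, with no appeal to $T$.
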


There are a few interesting things to note about our proof. First, instead of adapting Slaman and Steel's methods to work with non-continuous functions, we instead show that $f$---despite potentially being far from continuous---can be replaced by a hyp-equivalent function which \emph{is} continuous. We still have to modify their coding argument to work with hyperarithmetic reducibility rather than Turing reducibility, but in doing so we make heavy use of the fact that we can assume we are dealing with a continuous function.

This suggests that in some cases of Martin's conjecture where the functions being considered are not continuous, it may still be possible to replace them with related functions which are continuous. This idea has already borne fruit in the form of \cite{lutz2023part}, where it is combined with a refined version of the coding arguments introduced in this paper to prove part 1 of Martin's conjecture for order-preserving functions. 

Second, our results cast at least a little doubt on the idea that any use of determinacy in proving Martin's conjecture will be ``local'' (that is, the idea that only Borel determinacy is needed when dealing with Borel functions, and so on). Our proof seems to use more than Borel determinacy, even when the functions being considered are assumed to be Borel (specifically, our proof uses analytic determinacy). In section \ref{section-borel}, we show that Borel determinacy \emph{is} sufficient, but this requires a more careful analysis that was not needed for the $\AD$ proof.

Third, our reduction to the case of a continuous function is quite flexible and seems to work in many different degree structures, including the arithmetic degrees. Somewhat surprisingly, it seems much harder to adapt the coding argument used by Slaman and Steel, even once we are allowed to assume we are dealing with a continuous function. In this paper, we have to use a somewhat different coding argument than the one used by Slaman and Steel, and in doing so we have to rely on the $\Sigma^1_1$-bounding theorem. Also, we have so far not been able to modify either our coding argument or Slaman and Steel's to work for arithmetic reducibility (in our opinion, the regressive case of Martin's conjecture on the arithmetic degrees is an interesting open question).

\subsection*{Acknowledgements} Thanks to Ted Slaman, Benny Siskind, Gabe Goldberg, Vittorio Bard and James Walsh for helpful conversations and advice. The author would like to especially thank Ted Slaman for mentioning the problem and encouraging him to work on it.

\section{Preliminaries}

In this section we will provide some background on hyperarithmetic reducibility and on Martin's conjecture and then state some lemmas that we will use in the proof of Theorem \ref{thm-main}. All the lemmas are standard, with the exception of Lemma \ref{lemma-computableuniformization}, which we will see has a simple proof using standard techniques. For the reader intimidated by the axiom of determinacy, we note that the only way we will use determinacy in this paper is in the form of Lemma \ref{lemma-computableuniformization}.

\subsection{Background on Hyperarithmetic Reducibility}

The easiest definition of \term{hyperarithmetic reducibility} is that $y \le_H x$ if $y$ is $\Delta^1_1(x)$ definable (in which case we will often say that $y$ is \term{hyperarithmetic} in $x$). It is not very hard to see that this relation is transitive and thus deserves the title ``reducibility.'' As usual, we can then define hyperarithmetic equivalence and the structure of the hyperarithmetic degrees.

But there is another characterization of hyperarithmetic reducibility which is often useful and which we will now explain. Let $\omega_1^x$ denote the least countable ordinal with no presentation computable from $x$. Work of Davis, Kleene and Spector shows that for any $\alpha < \omega_1^x$, there is a notion of the $\alpha^\text{th}$ iterate of the jump of $x$ which is well-defined up to Turing equivalence~\cite{davis1950relatively, kleene1955hierarchies, spector1955recursive}. We denote this $\alpha^{\text{th}}$ jump of $x$ by $x^{(\alpha)}$. Kleene proved in~\cite{kleene1955hierarchies} that $y$ is hyperarithmetic in $x$ if and only if $y \leq_T x^{(\alpha)}$ for some $\alpha < \omega_1^x$. 

It will be helpful later in the paper if we make some of this more precise. Suppose $r$ is a real which codes a linear order $\leq_r$ on $\N$ which has a minimum element, $0_r$. If $x$ is any real, then a jump hierarchy on $r$ which starts with $x$ is a set $H \subset \N^2$ such that the $0_r^\text{th}$ column of $H$ is $x$ and for each $n \ne 0_r$, the $n^\text{th}$ column of $H$ is equal to the jump of the smaller columns of $H$ (smaller according to the ordering given by $\leq_r$). In other words, if we define
\begin{align*}
H_n &= \{i \mid \langle n, i\rangle \in H\}\\
H_{< n} &= \{\langle m, i\rangle \mid m <_r n \text{ and } \langle m, i\rangle \in H\}
\end{align*}
then we have $H_{0_r} = x$ and $H_n = (H_{< n})'$ for all $n \ne 0$. 

If $\leq_r$ happens to be a presentation of a well-order then there is always a unique $H$ satisfying the conditions above. Moreover, if $\alpha < \omega_1^x$ and $r$ codes a presentation of $\alpha$ which is computable from $x$ then the Turing degree of the unique jump hierarchy on $r$ starting from $x$ is independent of the specific choice of $r$. Such a jump hierarchy is considered to be the $\alpha^\text{th}$ jump of $x$ (which is only well-defined up to Turing degree). This makes precise the alternative characterization of hyperarithmetic reducibility mentioned above.

It is also worth mentioning here that hyperarithmetic reducibility is closely connected to Borel measurability. Just as every continuous function is computable relative to some oracle, every Borel function is hyperarithmetic relative to some oracle. More precisely, if $f$ is Borel then there is some countable ordinal $\alpha$, some $r$ which codes a presentation of $\alpha$, some real $y$ and some Turing functional $\Phi$ such that for all $x$, $f(x) = \Phi((x \oplus y)^{(\alpha)})$, where $(x\oplus y)^{(\alpha)}$ is taken to mean the unique jump hierarchy on $r$ starting from $x \oplus y$.

\subsection{Background on Martin's Conjecture}

As mentioned in the introduction, Martin's conjecture is a proposed classification of the limit behavior of functions on the Turing degrees under strong set theoretic hypotheses. It is traditionally divided into two parts. We will only discuss the first part here, since that is all that is relevant for this paper.

Very roughly, part 1 of Martin's conjecture states that if $f$ is a function from the Turing degrees to the Turing degrees then either $f(x)$ is constant for all large enough $x$ or $f(x) \ge_T x$ for all large enough $x$. There are three things to explain here. First, a caveat: the conjecture is actually stated not in terms of functions on the Turing degrees, but in terms of Turing invariant functions on the reals. Second, we need to state precisely what ``for all large enough $x$'' really means. Third, the conjecture is false in $\ZFC$ and is instead stated as a conjecture in the theory $\ZF + \AD$ (or sometimes $\ZF + \AD + \DC_{\R}$, though we will not need to use $\DC_{\R}$ in this paper). We will now explain each of these points in more detail.

First, let's define precisely what we mean by a \term{Turing invariant function on the reals}. A function $f \colon 2^\omega \to 2^\omega$ is called \term{Turing invariant} if for all $x$ and $y$ in $2^\omega$,
\[
x \equiv_T y \implies f(x) \equiv_T f(y).
\]
The point is that a Turing invariant function $f$ induces a function on the Turing degrees. Using the Axiom of Choice, it is clear that every function on the Turing degrees arises from a Turing invariant function on the reals, but this may fail in $\ZF$ (though it is true again if we assume $\AD_{\R}$, a strengthening of the Axiom of Determinacy). So Martin's conjecture is actually only classifying the behavior of functions on the Turing degrees which come from Turing invariant functions on the reals.

Since it will be useful to us, we will also mention here the definition of a \term{Turing invariant set of reals}. A subset $A \subseteq 2^\omega$ is called \term{Turing invariant} if for all $x$ and $y$ in $2^\omega$,
\[
x \equiv_T y \implies (x \in A \leftrightarrow y \in A).
\]

Next, let's explain what we mean by ``all large enough $x$.'' The key concept is that of a \term{cone of Turing degrees} (which is actually a Turing invariant subset of $2^\omega$ rather than a subset of the Turing degrees): a cone of Turing degrees is a set of the form $\{x \in 2^\omega \mid x \ge_T y\}$ for some fixed $y$. This $y$ is called the \term{base} of the cone and the cone is sometimes referred to as the \term{cone above $y$}. What we mean by ``all large enough $x$'' is simply ``for all $x$ in some cone.''

Third, we will mention a few things about the Axiom of Determinacy. The Axiom of Determinacy (often written $\AD$) is an axiom of set theory which is inconsistent with the Axiom of Choice and equiconsistent with the existence of infinitely many Woodin cardinals~\cite{koellner2009large}. We will not give a definition of the Axiom of Determinacy here, but simply mention the following fact, which is one of the main consequences of $\AD$ for computability theory.

\begin{theorem}[$\ZF + \AD$; Martin]
\label{thm-cone}
If $A$ is a Turing invariant subset of $2^\omega$ then either $A$ contains a cone or $A$ is disjoint from a cone.
\end{theorem}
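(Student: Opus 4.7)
The plan is to prove the theorem by applying $\AD$ to a carefully chosen integer game whose payoff set is $A$ itself, and then to extract from the winning strategy a cone of reals witnessing one of the two alternatives. Concretely, consider the game $G_A$ in which players I and II alternate playing bits, with I playing at even positions and II at odd positions, producing together a single real $z \in 2^\omega$; declare I to win the run iff $z \in A$. Since $A$ is a subset of $2^\omega$, the Axiom of Determinacy gives that $G_A$ is determined.

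First suppose I has a winning strategy $\sigma$. I claim that the cone above $\sigma$ is contained in $A$. Given any $x \ge_T \sigma$, let II play so that II's $n^\text{th}$ move is the $n^\text{th}$ bit of $x$, and let I respond according to $\sigma$. Let $z$ be the resulting play. Then the odd bits of $z$ are exactly the bits of $x$, so $x \le_T z$; on the other hand, each of I's bits is computed from $\sigma$ together with the portion of $x$ played so far, so $z \le_T \sigma \oplus x \equiv_T x$. Hence $z \equiv_T x$. Since $\sigma$ was winning, $z \in A$, and by the Turing invariance of $A$ we conclude $x \in A$. Thus $A$ contains the cone above $\sigma$.

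The symmetric case is analogous: if II has a winning strategy $\tau$, then for any $x \ge_T \tau$, let I play out the bits of $x$ and let II respond according to $\tau$. The resulting play $z$ satisfies $x \le_T z$ (the even bits of $z$ recover $x$) and $z \le_T \tau \oplus x \equiv_T x$, so $z \equiv_T x$. Since $\tau$ wins for II, $z \notin A$, and by Turing invariance $x \notin A$. So $A$ is disjoint from the cone above $\tau$, and one of the two alternatives in the theorem must hold.

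The only genuinely delicate point in this plan is verifying that running an arbitrary real $x \ge_T \sigma$ against a strategy $\sigma$ produces an outcome that is Turing equivalent (not merely Turing reducible) to $x$; this is why the game is arranged so that one player's moves are exactly the bits of $x$, guaranteeing that $x$ can be recovered from $z$. Once this coding is set up correctly the rest of the argument is a routine application of determinacy together with the Turing invariance hypothesis on $A$.
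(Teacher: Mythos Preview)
Your proof is correct and is the standard argument for Martin's cone theorem. Note, however, that the paper does not actually supply a proof of this statement: it is quoted as background (``we will not give a definition of the Axiom of Determinacy here, but simply mention the following fact''), so there is no proof in the paper to compare against. What you have written is exactly the classical proof one would find in any reference on the subject.
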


There is also a weak form of determinacy called ``Borel determinacy'' which \emph{is} provable in $\ZF$ and which is enough to prove Theorem \ref{thm-cone} if the set $A$ is assumed to be Borel.

We can now give a formal statement of part 1 of Martin's conjecture.

\begin{conjecture}[Part 1 of Martin's Conjecture]
Assuming $\ZF + \AD$, if $f \colon 2^\omega \to 2^\omega$ is a Turing invariant function then either $f(x) \ge_T x$ for all $x$ in some cone or there is some $y$ such that $f(x) \equiv_T y$ for all $x$ in some cone.
\end{conjecture}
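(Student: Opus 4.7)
I should note at the outset that the statement as given is Part~1 of Martin's conjecture in its full generality, which remains open; what follows is therefore only a plan of attack in the spirit of the present paper, not a complete proof.

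My first step would be a direct application of Martin's cone theorem (Theorem~\ref{thm-cone}) to the Turing-invariant set $A = \{x : f(x) \ge_T x\}$. If $A$ contains a cone we are in the first alternative of the conjecture. Otherwise its complement contains a cone, and we may fix a base $z_0$ above which $f(x) \not\ge_T x$; the remaining task is to prove that $f$ is constant on a cone.

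The second step would try to mirror the main innovation of this paper: replace $f$, on a cone, by a Turing-equivalent function of better regularity. In the best case one would pass to a \emph{uniformly} Turing-invariant representative, namely a fixed Turing functional $\Phi$ and oracle $y$ with $f(x) \equiv_T \Phi(x \oplus y)$ on a cone. Granted such a $\Phi$, a Slaman--Steel-style coding game would finish the proof: for each fixed $w \in 2^\omega$ one would play the game in which Player~I builds an $x$ with $w$ encoded into a sparse column while Player~II produces $\Phi(x)$; a winning strategy for Player~II (guaranteed by $\AD$) would force $\Phi(x)$ to be independent of $w$ modulo $\equiv_T$, and running this over a suitable family of $w$ would force $\Phi(x)$ to depend on only countably much data, hence to be constant on a cone by pigeonhole together with Theorem~\ref{thm-cone}.

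The main obstacle is precisely step two. There is no known mechanism in $\ZF + \AD$ for replacing an arbitrary Turing-invariant function by a uniformly invariant (or even continuous) one on a cone; this is essentially the content of the open uniform Martin's conjecture. The regressive case treated in this paper succeeds because the hypothesis $f(x) \le_T x$ provides an explicit, if ill-behaved, computation of $f(x)$ from $x$ which can be smoothed into a continuous function on a cone, whereas the hypothesis $f(x) \not\ge_T x$ supplies no comparable anchor. I would therefore expect any genuine proof of full Part~1 to require substantially new ideas beyond those developed here --- for instance, an inductive argument up the Wadge hierarchy of Turing-invariant functions, or a deeper structural use of Martin measures on the Turing degrees.
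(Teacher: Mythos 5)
You have correctly recognized that this statement is Part~1 of Martin's Conjecture itself, which the paper states as an open conjecture and does not prove; there is no proof in the paper for you to match, so the honest answer here is exactly the one you gave. Your first step is sound as far as it goes: the set $\{x : f(x) \ge_T x\}$ is Turing invariant (by Turing invariance of $f$), so Theorem~\ref{thm-cone} does yield the dichotomy ``$f(x) \ge_T x$ on a cone'' versus ``$f(x) \not\ge_T x$ on a cone,'' and the entire difficulty is concentrated in showing constancy in the second case. Your diagnosis of the obstruction is also accurate and matches the paper's own framing: the reduction to a continuous (indeed computable) representative in Lemma~\ref{lemma-replacef} works precisely because regressivity makes the relation $R(x,y) \iff x \ge_T y \wedge f(x) \equiv_H y$ have cofinal domain, so the computable uniformization principle (Lemma~\ref{lemma-computableuniformization}) applies; the hypothesis $f(x) \not\ge_T x$ provides no analogous relation below Turing reducibility to uniformize, and passing to a uniformly invariant representative in general is itself an open problem. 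The one caution I would add is that your sketched ``coding game'' in step two is more of a hope than an argument --- even granting a continuous representative $\Phi$, the known Slaman--Steel-style codings establish $f(x) \ge_T x$ under a regressivity-type hypothesis rather than constancy under its negation, so the second alternative of the conjecture would still require a genuinely new mechanism. In short: your proposal is appropriately non-committal about an open problem, and its account of why this paper's techniques do not extend is essentially correct.
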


In a slight abuse of terminology, the latter possibility in the conjecture is often written as ``$f$ is constant on a cone'' (even though it is the function that $f$ induces on the Turing degrees that is constant, not $f$ itself).

Finally, we mention that for many degree structures besides the Turing degrees (and in particular for the hyperarithmetic degrees), it is possible to state a sensible version of Martin's conjecture by just swapping out Turing reducibility for the appropriate alternative notion of reducibility in the definitions of ``Turing invariant function'' and ``cone of Turing degrees.'' This is reasonable to do in part because Theorem \ref{thm-cone} works for pretty much any notion of reducibility stronger than Turing reducibility (and also for many which are weaker).

\subsection{Determinacy Lemmas}

We now state a few lemmas that will help us apply determinacy even in situations where we have to deal with non-Turing invariant sets of reals. The key notion is that of a ``pointed perfect tree.''

\begin{definition}
A \term{perfect tree} is a tree, $T$, such that every node in $T$ has a pair of incompatible extensions which are both in $T$.
\end{definition}

\begin{definition}
A \term{pointed perfect tree} is a perfect tree, $T$, such that every path through $T$ computes $T$.
\end{definition}

\begin{notation}
If $T$ is a tree, we will use $[T]$ to refer to the set of paths through $T$.
\end{notation}

The reason pointed perfect trees are useful to work with is that if $T$ is a pointed perfect tree then $[T]$ contains a representative of every Turing degree which is above the Turing degree of $T$. Next, we will see that determinacy can be used to get pointed perfect trees. For a proof of Lemma \ref{lemma-determinacy}, see Lemma 3.5 of \cite{marks2016martins}.\footnote{Note that, as stated, Lemma 3.5 of \cite{marks2016martins} does not quite match our Lemma~\ref{lemma-determinacy} below. In particular, it is stated under the assumption of $\ZF + \AD + \DC$ and the function $h$ is defined on all of $2^\omega$ instead of just on a set which is cofinal in the Turing degrees. However, the proof never uses $\DC$ and is easy to modify to work with a function only defined on a cofinal set.}

\begin{definition}
A set $A \subseteq 2^\omega$ is cofinal in the Turing degrees if for all $x$ there is some $y \ge_T x$ such that $y \in A$ (note that $A$ is not required to be Turing invariant).
\end{definition}

\begin{lemma}[$\ZF + \AD$]
\label{lemma-determinacy}
Suppose $A \subseteq 2^\omega$ is cofinal in the Turing degrees and $h$ is a function on $A$ with countable range. Then there is a pointed perfect tree on which $h$ is constant.
\end{lemma}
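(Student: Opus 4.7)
The plan is to apply $\AD$ to a Gale--Stewart game and combine it with a standard coding trick to produce a pointed perfect tree on which $h$ is constant.

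First, I would enumerate the range of $h$ as $\{c_n : n \in \omega\}$ and let $A_n = h^{-1}(c_n)$, so $A = \bigsqcup_n A_n$. Using countable choice for reals (which follows from $\AD$), at least one $A_n$ must be cofinal in the Turing degrees: otherwise one could choose a witness $x_n$ to the failure of cofinality of each $A_n$ and take $x = \bigoplus_n x_n$ to contradict the cofinality of $A$. Without loss of generality $A_0$ is cofinal, and its Turing-invariant saturation $\tilde A_0 = \{x : \exists y \equiv_T x,\ y \in A_0\}$ is then cofinal and Turing invariant, hence contains a Turing cone by Martin's cone theorem.

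Next, I would consider the Gale--Stewart game in which the two players alternate playing bits, producing a real $x \in 2^\omega$, and player II wins iff $x \in \tilde A_0$. Since $\tilde A_0$ contains a cone, a standard argument yields a winning strategy $\sigma$ for player II. From $\sigma$ I would build a pointed perfect tree $T$ via the usual coding trick: look only at plays in which player I's bits are of the form $\sigma \oplus y'$ for varying $y' \in 2^\omega$, so that $\sigma$ itself is encoded into a fixed subsequence of player I's bits. Each such outcome lies in $\tilde A_0$ by the winning property of $\sigma$, and also computes $\sigma$ (hence the tree $T$ of such outcomes), giving pointedness.

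The main obstacle I expect is refining ``$[T] \subseteq \tilde A_0$'' to the actual conclusion that $h$ is constant on $[T]$ --- that is, obtaining a pointed perfect tree inside $A_0$ itself, not merely inside its Turing-invariant saturation. My plan for this is to extend $h$ to a total function $\hat h \colon 2^\omega \to \{c_n\} \cup \{\ast\}$ with $\hat h \equiv \ast$ off $A$, and to rerun the game-theoretic argument for $\hat h$, whose level sets $\hat h^{-1}(c)$ partition all of $2^\omega$ rather than just $A$. With care, the coding trick can be arranged so that the resulting pointed perfect tree lies inside one of these level sets directly; cofinality of $A$ then rules out the case $c = \ast$ (or, more weakly, makes ``$h$ constant on $[T]$'' vacuously true in that case), giving $[T] \subseteq A$ and $h \equiv c$ on $[T]$. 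Making the coding trick deliver a tree inside a non-Turing-invariant level set, rather than merely inside its Turing-invariant saturation, is the most delicate step.
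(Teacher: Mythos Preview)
The paper does not prove this lemma itself; it cites Lemma~3.5 of Marks--Slaman--Steel and remarks in a footnote that the proof there (stated for a total $h$ on $2^\omega$) is easy to adapt to a function defined only on a cofinal set. So there is no in-paper argument to compare against directly.

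Your proposal, however, has a genuine gap. You correctly isolate the central difficulty: the cone theorem applied to the Turing saturation $\tilde A_0$ only yields a pointed perfect tree inside $\tilde A_0$, not inside $A_0$ itself, and the lemma (as used immediately afterward to prove the computable-uniformization lemma) genuinely needs $[T]\subseteq A$. But your proposed fix does not work. The claim that ``cofinality of $A$ then rules out the case $c=\ast$'' is false: take $A=\{x : x(0)=0\}$, which is cofinal, yet the computable (hence pointed) perfect tree of all reals beginning with $1$ lies entirely in $A^c=\hat h^{-1}(\ast)$. The vacuous-reading fallback does not rescue this, since then $[T]\not\subseteq A$. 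More fundamentally, the step you yourself flag as ``the most delicate'' --- producing a pointed perfect tree inside a single non-Turing-invariant level set --- is essentially the entire content of the result, and ``with care, the coding trick can be arranged'' is not an argument. The standard proof does not route through the cone theorem at all: one plays a determinacy game directly for each level set (Player~I wins iff the jointly built real lands in that level set), and if Player~I never wins, one combines Player~II's countably many winning strategies via a fusion that freezes finitely many branching levels at each stage while thinning below them, producing in the limit a pointed perfect tree meeting no level set and hence a contradiction. None of this machinery is present in your outline.
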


The following lemma will be our only use of determinacy in the proofs in the rest of this paper. Essentially it is a kind of computable uniformization principle provable from $\AD$.

\begin{lemma}[$\ZF + \AD$]
\label{lemma-computableuniformization}
Suppose $R$ is a binary relation on $2^\omega$ such that
\begin{itemize}
    \item The domain of $R$ is cofinal in the Turing degrees: for all $z$ there is some $x \ge_T z$ and some $y$ such that $(x, y) \in R$
    \item and $R$ is a subset of Turing reducibility: for every $(x, y) \in R$, $x \ge_T y$.
\end{itemize}
Then there is a pointed perfect tree $T$ and a Turing functional $\Phi$ such that for every $x \in [T]$, $\Phi(x)$ is total and $(x, \Phi(x)) \in R$. In other words, $\Phi$ is a computable choice function for $R$ on $[T]$.
\end{lemma}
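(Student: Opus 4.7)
The plan is to reduce \ref{lemma-computableuniformization} directly to \ref{lemma-determinacy} by using indices of Turing functionals to encode a choice of witness $y$ for each $x$ in the domain of $R$. The observation that makes this work is the second bullet in the hypothesis: whenever $(x,y) \in R$ we have $y \leq_T x$, so there must be some Turing functional $\Phi_e$ with $\Phi_e(x) = y$. Hence, on the domain of $R$, it makes sense to talk about \emph{the least index $e$ such that $\Phi_e(x)$ is total and $(x, \Phi_e(x)) \in R$}.

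Concretely, I would let $A = \{x \in 2^\omega \mid \exists y\, (x,y) \in R\}$ and note that $A$ is cofinal in the Turing degrees by the first bullet of the hypothesis. Define $h \colon A \to \omega$ by
\[
h(x) = \mu e\bigl[\Phi_e(x) \text{ is total and } (x, \Phi_e(x)) \in R\bigr].
\]
This is well-defined on $A$ by the previous paragraph, and its range is a subset of $\omega$, hence countable. Therefore $h$ satisfies the hypotheses of Lemma \ref{lemma-determinacy}, so there is a pointed perfect tree $T$ with $[T] \subseteq A$ on which $h$ is constant, say with value $e_0$. Setting $\Phi = \Phi_{e_0}$ then gives the desired conclusion: by construction, for every $x \in [T]$ the functional $\Phi(x)$ is total and $(x, \Phi(x)) \in R$.

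I do not foresee a real obstacle here; the whole content of the lemma is really in Lemma \ref{lemma-determinacy}, and the only thing one has to check is that the ``choose the least index'' trick is legitimate, which is exactly what the hypothesis $y \leq_T x$ buys us. If there is anything subtle, it is purely definitional, namely unpacking what it means for the pointed perfect tree produced by Lemma \ref{lemma-determinacy} to ``be a tree on which $h$ is constant'' --- specifically that $[T]$ is contained in the domain $A$ of $h$, which is what allows the selected functional $\Phi_{e_0}$ to produce a witness for every single path through $T$ rather than only for paths that happen to lie in $A$.
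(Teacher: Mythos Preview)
Your proof is correct and is essentially identical to the paper's own argument: define the least index $e_x$ with $\Phi_{e_x}(x)$ total and $(x,\Phi_{e_x}(x))\in R$, then apply Lemma~\ref{lemma-determinacy} to make this index constant on a pointed perfect tree. Your discussion of the point that $[T]\subseteq A$ is a nice clarification that the paper leaves implicit.
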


\begin{proof}
For each $x$ in the domain of $R$ there is some $e$ such that $\Phi_e(x)$ is total and $R(x, \Phi_e(x))$ holds. Let $e_x$ denote the smallest such $e$. By determinacy (in the form of Lemma \ref{lemma-determinacy}), there is a pointed perfect tree $T$ on which $e_x$ is constant. Let $e$ be this constant value. Then $T$ and $\Phi_e$ satisfy the conclusion of the lemma.
\end{proof}

It will be useful below to note that if $A$ and $h$ in Lemma \ref{lemma-determinacy} are Borel then the result is provable in $\ZF$, and similarly that if $R$ in the above lemma is assumed to be Borel, then that result, too, is provable in $\ZF$.

\subsection{Pointed Perfect Tree Lemmas}

Now we will state a couple of lemmas that are helpful when working with pointed perfect trees. These lemmas do not require the Axiom of Determinacy. The proofs are routine, so we omit them, but note that Lemma~\ref{lemma-thinning} is essentially Lemma V.2.7 of~\cite{lerman1983degrees} and Lemma~\ref{lemma-invertible} is similar to Lemma V.2.6 (though with an extra use of compactness necessary). Proofs of both lemmas can also be found in~\cite{lutz2023part} (see Lemmas 2.7 and 2.1).

\begin{lemma}
\label{lemma-thinning}
Suppose $T$ is a pointed perfect tree and $\Phi$ is a Turing functional such that $\Phi(x)$ is total for every $x \in [T]$. Then either $\Phi$ is constant on a pointed perfect subtree of $T$ or $\Phi$ is injective on a pointed perfect subtree of $T$.
\end{lemma}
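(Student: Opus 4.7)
The plan is to prove this by contraposition: assuming $\Phi$ is not constant on any pointed perfect subtree of $T$, I will construct a pointed perfect subtree $S \subseteq T$ on which $\Phi$ is injective, via a standard perfect-tree recursion carried out effectively in $T$.

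The first step is a splitting observation. For each $\sigma \in T$, let $T_\sigma$ denote the subtree of nodes of $T$ comparable with $\sigma$. Since $T$ is pointed perfect and $\sigma \in T$, the tree $T_\sigma$ is again a pointed perfect subtree of $T$, so by assumption $\Phi$ is not constant on $[T_\sigma]$. Picking $x, y \in [T_\sigma]$ with $\Phi(x) \ne \Phi(y)$ and using continuity of $\Phi$, one obtains incomparable $\tau_0, \tau_1 \in T$ extending $\sigma$ and an index $n$ such that $\Phi(\tau_i)(n)$ converges for $i = 0, 1$ to distinct values with use contained in $\tau_i$. Since $\Phi$ is a fixed Turing functional, such a triple $(\tau_0, \tau_1, n)$ can be located by a computable search in $T$.

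Now recursively build an embedding $s \mapsto \sigma_s$ of $2^{<\omega}$ into $T$: start with $\sigma_\emptyset$ equal to the stem of $T$, and given $\sigma_s$, apply the splitting observation at $\sigma_s$ to obtain $\sigma_{s \frown 0}, \sigma_{s \frown 1} \in T$ extending $\sigma_s$ whose $\Phi$-images disagree at some position, with the disagreement locked in by $\sigma_{s \frown 0}$ and $\sigma_{s \frown 1}$ themselves. Let $S$ be the tree of all initial segments of strings of the form $\bigcup_k \sigma_{s \restriction k}$ for $s \in 2^\omega$. Then $S$ is perfect by construction, and $S \leq_T T$ because the recursion is computable in $T$. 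Any $x \in [S]$ also lies in $[T]$ and hence computes $T$, so it computes $S$, making $S$ pointed. Finally, if $x \ne y$ are distinct paths of $S$, they correspond to distinct elements of $2^\omega$ and hence pass through $\sigma_{t \frown 0}$ and $\sigma_{t \frown 1}$ respectively for some common prefix $t$; at that stage the construction forced $\Phi(x)$ and $\Phi(y)$ to disagree at the locked-in bit, so $\Phi$ is injective on $[S]$.

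The only delicate point is keeping the construction effective in $T$ so that pointedness of $S$ is preserved, which reduces to the computable searchability of the splitting witnesses; this is immediate. Overall, no substantial obstacle is anticipated, as the argument is a routine perfect-tree recursion once the splitting lemma is in hand.
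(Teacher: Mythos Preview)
Your proposal is correct and is exactly the standard splitting-tree construction. The paper does not actually give a proof of this lemma---it omits it as routine and refers to Lemma~V.2.7 of Lerman's \emph{Degrees of Unsolvability}---so your argument is the expected one.
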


\begin{lemma}
\label{lemma-invertible}
Suppose $T$ is a perfect tree and $\Phi$ is a Turing functional such that $\Phi(x)$ is total for every $x \in [T]$ and $\Phi$ is injective on $[T]$. Then for each $x \in [T]$,
\[
\Phi(x) \oplus T \ge_T x.
\]
In fact, this reduction is even uniform in $x$ (though we won't need to use that fact in this paper).
\end{lemma}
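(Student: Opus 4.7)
The plan is to give an explicit reduction: working with $T \oplus \Phi(x)$ as an oracle, I will reconstruct $x$ level by level via a compactness-based ``killing'' argument on the nodes of $T$.

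The key topological input is that $[T]$ is compact (as a closed subset of $2^\omega$) and $\Phi$ is continuous on $[T]$, since it is total there as a Turing functional. Combined with injectivity, this makes $\Phi \colon [T] \to \Phi([T])$ a homeomorphism, so any two disjoint closed subsets of $[T]$ have $\Phi$-images that are separated at some finite level of $2^\omega$. Applying this to $\{y \in [T] \mid \tau \subset y\}$ and $\{x\}$ yields the combinatorial fact I will use: for every $n$ and every $\tau \in T \cap 2^n$ with $\tau \ne x \upharpoonright n$, there is some $m$ such that no $y \in [T]$ extending $\tau$ has $\Phi(y) \upharpoonright m = \Phi(x) \upharpoonright m$.

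Given this, the reduction is straightforward. For each $n$, I enumerate the finitely many nodes $\tau \in T \cap 2^n$ (computable from $T$) and run, in parallel for each such $\tau$, the following search: at stage $s$, form the finite tree $T_{\tau,s}$ consisting of nodes $\sigma \in T$ of length at most $s$ that are comparable with $\tau$ and on which the stage-$s$ approximation of $\Phi(\sigma)$ is compatible with $\Phi(x)$; this is computable from $T \oplus \Phi(x)$. Declare $\tau$ ``killed'' at stage $s$ if $T_{\tau,s}$ has no nodes of length $s$, and output as $x \upharpoonright n$ the unique $\tau$ that is never killed. Correctness has two halves: the true initial segment $x \upharpoonright n$ is never killed, because $x$ itself witnesses an infinite path at every stage; and any other $\tau$ is eventually killed, because by the combinatorial fact above the subtree of extensions of $\tau$ in $T$ whose $\Phi$-image is compatible with $\Phi(x)$ has no infinite path, hence is finite by K\"{o}nig's lemma, and so $T_{\tau,s}$ eventually has no node of length $s$.

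The main obstacle, as often in arguments of this shape, is bridging the abstract topological compactness (which only asserts existence of the separating level $m$) with an effective search (which must actually halt). Here that bridge is K\"{o}nig's lemma applied to the finitely branching subtree of $T$ described above, converting ``has no infinite path in $[T]$'' into ``dies out at some finite stage''. The same analysis also yields the parenthetical uniformity: the functional producing $x$ from $T \oplus \Phi(x)$ depends only on the index of $\Phi$, not on $x$.
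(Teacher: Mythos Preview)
Your proof is correct. The paper omits the proof of this lemma as routine, merely noting that an ``extra use of compactness'' is needed beyond Lerman's Lemma V.2.6 (and pointing to \cite{lutz2023part} for details); your argument supplies exactly that compactness step via K\"{o}nig's lemma on the subtree of candidate extensions, which is the standard approach.
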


\subsection{Computable Linear Orders}

To work with hyperarithmetic reducibility, we will need to make use of a few facts about computable linear orders and computable well-orders. Proofs can be found in \cite{sacks1990higher}.

One of the most important facts about computable well-orders is the $\Sigma^1_1$-bounding theorem. Essentially it says that every $\Sigma^1_1$-definable collection of well-orders is bounded below a computable ordinal. The theorem comes in multiple flavors, depending on whether we are talking about sets of programs which compute presentations of well-orders, or real numbers which \emph{are} presentations of well-orders and depending on whether the $\Sigma^1_1$ definition is boldface, lightface, or lightface relative to some fixed real. Below, we just state the two versions that we will need in this paper.

\begin{theorem}
Suppose that $x$ is a real and $A$ is a $\Sigma^1_1(x)$ definable set of codes for programs such that for every $e$ in $A$, $\Phi_e(x)$ is a presentation of a well-order. Then there is some $\alpha < \omega_1^x$ which is greater than every ordinal with a presentation coded by an element of $A$.
\end{theorem}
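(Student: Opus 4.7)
The plan is to derive the theorem from the $\Pi^1_1(x)$-completeness of the set
\[
\mathcal{W}^x = \{e \in \omega : \Phi_e(x) \text{ is total and codes a well-order on } \N\},
\]
which in particular implies $\mathcal{W}^x$ is not $\Sigma^1_1(x)$. I would argue by contradiction: assume that the ordinals $|\Phi_e(x)|$ for $e \in A$ are cofinal in $\omega_1^x$. The goal is to leverage $A$ to give a $\Sigma^1_1(x)$ definition of $\mathcal{W}^x$, yielding the contradiction.

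The key step is to consider the set
\[
B = \bigl\{e : \Phi_e(x) \text{ is a total linear order on } \N \text{ and } \exists e' \in A\ \exists h \in \N^\N\text{ an order-preserving embedding } \Phi_e(x) \hookrightarrow \Phi_{e'}(x) \bigr\}.
\]
The condition ``$\Phi_e(x)$ is a total linear order'' is arithmetic in $x$, ``$e' \in A$'' is $\Sigma^1_1(x)$ by hypothesis, and ``there exists an order-preserving embedding'' is a $\Sigma^1_1$ clause over these data, so $B$ is $\Sigma^1_1(x)$.

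Next I would verify that $B = \mathcal{W}^x$. For the inclusion $\mathcal{W}^x \subseteq B$: if $\Phi_e(x)$ is a well-order then $|\Phi_e(x)| < \omega_1^x$, so by the assumed cofinality of $A$ there is some $e' \in A$ with $|\Phi_{e'}(x)| > |\Phi_e(x)|$, which produces the required embedding. For the reverse inclusion $B \subseteq \mathcal{W}^x$: any linear order that order-embeds into a well-order is itself a well-order, since a pullback of a descending sequence would be descending. Together, these give $\mathcal{W}^x \in \Sigma^1_1(x)$, contradicting $\Pi^1_1(x)$-completeness of $\mathcal{W}^x$, and hence $A$ must be bounded below some $\alpha < \omega_1^x$.

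I expect the only delicate point to be verifying the complexity count for $B$ — specifically, ensuring that the quantifier over $e' \in A$ stays $\Sigma^1_1(x)$ and does not get promoted when combined with the witness for the embedding. The rest is a standard comparison argument using the fact that well-orders are linearly ordered by embeddability, so once the cofinality assumption is in place, pulling an arbitrary well-order back into a presentation in $A$ is automatic.
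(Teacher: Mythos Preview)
The paper does not actually prove this theorem: it is listed among the background facts on computable linear orders with the remark that ``Proofs can be found in \cite{sacks1990higher},'' so there is no in-paper argument to compare against. Your proposal is correct and is essentially the standard textbook proof of $\Sigma^1_1$-bounding: reduce to the $\Pi^1_1(x)$-completeness of $\mathcal{W}^x$ by showing that a cofinal $\Sigma^1_1(x)$ set of well-order indices would let you write $\mathcal{W}^x$ as ``embeds into something in $A$,'' which is $\Sigma^1_1(x)$. The complexity check you flag as delicate is fine --- the number quantifier over $e' \in A$ and the real quantifier for the embedding both stay on the existential side, so the conjunction is $\Sigma^1_1(x)$ as you claim.
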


\begin{theorem}
If $A$ is a $\mathbf{\Sigma^1_1}$ definable set of presentations of well-orders then there is some $\alpha < \omega_1$ which is greater than every ordinal with a presentation in $A$.
\end{theorem}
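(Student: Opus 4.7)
The plan is to argue by contradiction using the classical embedding trick. Suppose $A$ is $\mathbf{\Sigma^1_1}$, every $a \in A$ codes a well-order, and yet the ordinals $\{|a| : a \in A\}$ are cofinal in $\omega_1$. From this I will derive that the set $WO$ of all presentations of well-orders on $\N$ is itself $\mathbf{\Sigma^1_1}$, which contradicts a classical fact from descriptive set theory.

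The key construction is the set $B$ of all $r \in 2^\omega$ such that $r$ codes a linear order on $\N$ and there exist $a \in A$ and an order-preserving injection $f \colon \N \to \N$ embedding $r$ into $a$. Since $A$ is $\mathbf{\Sigma^1_1}$ and the embedding clause is arithmetic in the parameters $r$, $a$, and $f$, existentially quantifying over $a$ and $f$ keeps $B$ in $\mathbf{\Sigma^1_1}$.

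I would then verify that $B = WO$. If $r \in WO$, then by unboundedness there is some $a \in A$ with $|a| \ge |r|$, and the isomorphism of $r$ onto an initial segment of $a$ witnesses $r \in B$. Conversely, if $r \in B$ then $r$ is a linear order embeddable into some well-order $a \in A$, and any infinite descending sequence in $r$ would push forward to one in $a$, which is impossible; so $r \in WO$.

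The main obstacle---really the only non-elementary ingredient---is then appealing to the classical theorem that $WO \notin \mathbf{\Sigma^1_1}$. This is standard: $WO$ is $\mathbf{\Pi^1_1}$-complete, so if it were also $\mathbf{\Sigma^1_1}$ then by Suslin's theorem it would be Borel, whence every $\mathbf{\Pi^1_1}$ set would be Borel (as a continuous preimage of $WO$), contradicting the classical existence of a non-Borel $\mathbf{\Pi^1_1}$ set. With this contradiction, $A$ must be bounded by some $\alpha < \omega_1$, as required.
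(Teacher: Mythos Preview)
Your argument is correct and is the standard textbook proof of boldface $\Sigma^1_1$-bounding. Note, however, that the paper does not actually give its own proof of this statement: it is listed among the preliminaries on computable linear orders with the remark that proofs can be found in Sacks' \emph{Higher Recursion Theory}, and is then used as a black box later on. So there is nothing in the paper to compare your proof against beyond observing that what you have written is precisely the classical argument one finds in the cited reference.
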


We will also need some ideas originally introduced by Harrison in \cite{harrison1968recursive}.

\begin{definition}
If $x$ is a real and $r$ is a real computable from $x$ that codes a presentation of a linear order, then $r$ is a \term{pseudo-well-order relative to $x$} if it is ill-founded but contains no infinite descending sequence which is hyperarithmetic in $x$.
\end{definition}

\begin{lemma}
\label{lemma-harrisondefinable}
If $r$ is a presentation of a linear order that is computable from a real $x$ then the assertion ``$r$ has no infinite descending sequence which is hyperarithmetic in $x$'' is equivalent to a $\Sigma^1_1(x)$ formula.
\end{lemma}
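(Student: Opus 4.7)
The plan is to exhibit the statement as the explicit $\Sigma^1_1(x)$ formula
\[
(\exists H\subseteq\N\times\N)\bigl[H\text{ is a jump hierarchy on }r\text{ starting from }x\bigr].
\]
The predicate ``$H$ is a jump hierarchy on $r$ starting from $x$'' only asserts that $H_{0_r}=x$ and $H_n=(H_{<n})'$ for each $n\ne 0_r$, which is arithmetical in $(H,r,x)$. Since $r\le_T x$ by hypothesis, existentially quantifying over $H$ yields a $\Sigma^1_1(x)$ formula.

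For the direction ``a jump hierarchy exists $\Rightarrow$ no $\le_H x$ descending sequence,'' suppose $H$ is such a hierarchy and $s_0>_r s_1>_r\cdots$ is an $\le_H x$ descending sequence. Because each $H_{s_{i+1}}$ appears as a column of $H_{<s_i}$ while $H_{s_i}=(H_{<s_i})'$, a short induction shows that $H_{s_0}$ uniformly computes $H_{s_n}^{(n)}\ge_T x^{(n+1)}$ for every $n$. Iterating the same observation along the descending sequence (and using $\{s_i\}\le_T x^{(\alpha)}$ for some $\alpha<\omega_1^x$), one obtains $H_{s_0}\ge_T x^{(\beta)}$ for cofinally many $\beta<\omega_1^x$, at which point a $\Sigma^1_1(x)$-bounding argument applied to the indices witnessing these reductions forces a contradiction.

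For the converse direction, ``no $\le_H x$ descending sequence $\Rightarrow$ a jump hierarchy exists,'' I would invoke the classical Harrison existence theorem. One builds $H$ by transfinite recursion along the well-founded part of $r$---whose rank is $\omega_1^x$ in the pseudo-well-order case---and extends it through the non-well-founded part; any obstruction to the extension would produce a hyperarithmetic-in-$x$ descending sequence in $r$, contrary to hypothesis.

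The main obstacle is the converse direction: Harrison's existence theorem for jump hierarchies on pseudo-well-orders is the substantive classical input and is what makes the $\Sigma^1_1(x)$ reformulation tight. The forward direction is routine jump-hierarchy bookkeeping combined with a standard application of $\Sigma^1_1$-bounding.
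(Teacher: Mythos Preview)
The paper does not prove this lemma; it is stated as background with a citation to Sacks's \emph{Higher Recursion Theory}. The textbook argument goes through Kleene's theorem that existential quantification over the reals hyperarithmetic in $x$ is a $\Pi^1_1(x)$ quantifier (equivalently, the Spector--Gandy theorem): the assertion ``there is an infinite descending sequence in $r$ which is hyperarithmetic in $x$'' has the form $(\exists f \le_H x)\,\phi(f,r,x)$ with $\phi$ arithmetical, hence is $\Pi^1_1(x)$, and its negation is $\Sigma^1_1(x)$.

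Your route via existence of a jump hierarchy is genuinely different, and the converse direction (no descending sequence hyperarithmetic in $x$ $\Rightarrow$ a jump hierarchy exists) is correctly identified as Harrison's theorem. But your forward direction has a real gap. From a jump hierarchy $H$ and a descending sequence $(s_i) \le_H x$ you correctly obtain $H_{s_0} \ge_T H_{s_n}^{(n)} \ge_T x^{(n+1)}$ for each finite $n$. The leap from there to ``$H_{s_0} \ge_T x^{(\beta)}$ for cofinally many $\beta < \omega_1^x$'' is not justified by what you wrote, and even if it held it would not be a contradiction: a single real---Kleene's $\mathcal{O}^x$, for instance---can compute $x^{(\beta)}$ for every $\beta < \omega_1^x$. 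The invocation of $\Sigma^1_1(x)$-bounding is too vague to repair this; bounding constrains the ordinals appearing in a $\Sigma^1_1(x)$ set of well-orders, not the computational strength of a fixed real. So as written the forward implication is not established, and the proposal does not yet prove the lemma.
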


\begin{lemma}
\label{lemma-pseudowellordercomputes}
If $r$ is a pseudo-well-order relative to $x$ and $H$ is a jump hierarchy on $r$ that starts with $x$ then $H$ computes every real which is hyperarithmetic in $x$.
\end{lemma}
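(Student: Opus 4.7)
The plan is to leverage the Kleene characterization of hyperarithmetic reducibility: since every $y \leq_H x$ satisfies $y \leq_T x^{(\alpha)}$ for some $\alpha < \omega_1^x$, it suffices to show that $H$ computes $x^{(\alpha)}$ for every such $\alpha$. To do this, the key structural fact to establish is that the well-founded initial segment of $r$ has order type at least $\omega_1^x$; once this is in hand, each $x^{(\alpha)}$ will appear (up to Turing equivalence) as the restriction of the jump hierarchy $H$ to an appropriate initial segment of $r$.

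For the main structural step, let $W$ denote the well-founded part of $r$, and let $B = r \setminus W$. I would argue by contradiction: suppose $W$ has order type $\beta < \omega_1^x$. Then $\beta$ has a presentation $\sigma$ computable from $x$. Using a jump hierarchy on $\sigma$ starting from $x$ (which is itself hyperarithmetic in $x$), one can compute the rank function from $W$ into the well-order presented by $\sigma$ by transfinite recursion along $\sigma$; in particular, both $W$ and $B$ are hyperarithmetic in $x$. Since $r$ is ill-founded, $B$ is nonempty. Moreover, $B$ has no $<_r$-least element: if $n \in B$ then there is an infinite descending sequence in $r$ below $n$, and no term of this sequence can lie in $W$, since elements of $W$ admit no infinite descending sequence below them. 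A greedy search inside the hyperarithmetic-in-$x$ set $B$ therefore produces an infinite descending sequence in $B$ that is hyperarithmetic in $x$, contradicting the hypothesis that $r$ is a pseudo-well-order relative to $x$.

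Given this, the lemma follows quickly. Fix $\alpha < \omega_1^x$ and pick $n \in W$ whose initial segment $I_n = \{m : m <_r n\}$ is well-founded of order type $\gamma \geq \alpha$. Then $H_{<n}$ is a jump hierarchy on the well-order $I_n$ starting from $x$, so by uniqueness of jump hierarchies on presentations of ordinals we have $H_{<n} \equiv_T x^{(\gamma)}$; in particular $H_{<n}$ computes $x^{(\alpha)}$, and hence $y$. Since $H \geq_T H_{<n}$, this yields $H \geq_T y$ as required.

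The main obstacle is the structural step: showing that $W$ must be hyperarithmetic in $x$ whenever its order type is less than $\omega_1^x$. The detailed construction of the rank function along $\sigma$ requires some bookkeeping, but is routine given a computable presentation of $\beta$; once $W$ and $B$ are known to be hyperarithmetic in $x$, the descending-sequence argument that delivers the contradiction is immediate. As an alternative worth keeping in mind, one could replace the explicit rank-function computation by an appeal to $\Sigma^1_1(x)$-bounding, using the $\Sigma^1_1(x)$-definable family of embeddings of $x$-computable well-orders into $r$, though the rank-function approach seems more direct here.
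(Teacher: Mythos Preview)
The paper does not prove this lemma; it is listed among the preliminary facts about computable linear orders with a reference to Sacks' \emph{Higher Recursion Theory}, so there is no in-paper argument to compare against. Your sketch is the classical proof and is correct: show the well-founded part $W$ of $r$ has order type at least $\omega_1^x$ (otherwise the rank function along an $x$-computable presentation of its order type makes $W$, and hence the ill-founded part $B$, hyperarithmetic in $x$, and a greedy search through $B$ yields a hyperarithmetic descending sequence), then read each $x^{(\alpha)}$ off the restriction of $H$ to a suitable well-founded initial segment.

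One small point to tighten in the final step: when you pick $n \in W$ with $|I_n| = \gamma \geq \alpha$, you also need $\gamma < \omega_1^x$ so that $x^{(\gamma)}$ is defined. Since you have shown $W$ has order type at least $\omega_1^x > \alpha$, simply take $n$ to be the element of $W$ of rank exactly $\alpha$.
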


\section{Proof of the Main Theorem}

In this section, we will prove Theorem \ref{thm-main}. Before we launch into the details of the proof, we will give an outline of the general strategy. And before we do that, we will recall the general strategy followed by Slaman and Steel in their proof of Theorem \ref{thm-ss}. The steps of their proof are essentially as follows.
\begin{itemize}
    \item First, use determinacy to show that there is a pointed perfect tree on which $f$ is computable. Then use Lemma \ref{lemma-thinning} to show that we can also assume $f$ is injective.
    \item Next, show that if $x$ is in the pointed perfect tree, then every function computable from $x$ is dominated by a function computable from $f(x)$. The idea is that if $x$ computes a function which is not dominated by any function computable from $f(x)$ then $x$ can diagonalize against $f(x)$ by using this function to guess convergence times for $f(x)$ programs. The diagonalization produces a real $y$ in the same Turing degree as $x$ such that $f(x)$ cannot compute $f(y)$, thereby contradicting the Turing invariance of $f$.
    \item Once you can assume that $f$ is computable and injective on a pointed perfect tree and that if $x$ is in this tree then every function computed by $x$ is dominated by a function computed by $f(x)$, use a coding argument to show that $f(x) \ge_T x$. The coding argument works by coding bits of $x$ into the relative growth rates of two fast growing functions computed by $f(x)$.
\end{itemize}
Our proof makes three main modifications to this outline. First, instead of showing that $f$ is computable on some pointed perfect tree, we show that $f$ is hyp-equivalent to some computable function on a pointed perfect tree. Thus we may work with that function instead of $f$. Second, instead of showing that every fast growing function computed by $x$ is dominated by a function computed by $f(x)$, we show that every well-order computed by $x$ embeds into a well-order computed by $f(x)$---in other words that $\omega_1^x = \omega_1^{f(x)}$. Third, instead of coding bits of $x$ into the relative growth rates of fast growing functions computed by $f(x)$, we code the bits of $x$ into the Kolmogorov complexities of initial segments of reals computed by $f(x)$ (though it is not necessary to know anything about Kolmogorov complexity to follow our argument). Also, to be able to carry out the coding argument, we will first have to use a trick involving $\Sigma^1_1$-bounding. To sum up, here's an outline of our proof.
\begin{itemize}
    \item First, we will use determinacy to replace $f$ with a hyp-equivalent function which is computable on a pointed perfect tree. By using Lemma \ref{lemma-thinning}, we can also assume that $f$ is injective.
    \item Next we show that $\omega_1^{f(x)} = \omega_1^x$ for all $x$ in the pointed perfect tree. The idea is if $\omega_1^{f(x)}$ was less than $\omega_1^x$ then $x$ would be able to diagonalize against $f(x)$ by using $\omega_1^{f(x)}$ jumps.
    \item Once we are able to assume that $f$ is computable and injective and that $\omega_1^{f(x)} = \omega_1^x$, we will use a coding argument to show that $f(x) \ge_H x$. In our coding argument, it will be important to know that there is a single ordinal $\alpha < \omega_1^x$ such that for every real $y$ in the same Turing degree as $x$, $f(x)^{(\alpha)}$ computes $f(y)$. We will prove this fact using $\Sigma^1_1$-bounding.
\end{itemize}
And now it's time to present the actual proof.

\subsection{Replacing $f$ with an injective, computable function}

First we will show that $f$ can be replaced by a computable function. This is the only part of the proof that uses determinacy.

\begin{lemma}[$\ZF + \AD$]
\label{lemma-replacef}
Suppose $f \colon 2^\omega \to 2^\omega$ is hyp-invariant and hyp-regressive. Then there is a Turing functional $\Phi$ and a pointed perfect tree $T$ such that for all $x \in [T]$, $\Phi(x)$ is total and $\Phi(x) \equiv_H f(x)$.
\end{lemma}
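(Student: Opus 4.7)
The plan is to apply the uniformization principle of Lemma~\ref{lemma-computableuniformization} to a carefully chosen relation $R$. Specifically, I would set
\[
R(x, y) \iff y \le_T x \text{ and } y \equiv_H f(x).
\]
The second clause is exactly the conclusion we want (with $y = \Phi(x)$), and the first clause records that $R$ is a subset of Turing reducibility---one of the two hypotheses of Lemma~\ref{lemma-computableuniformization}.

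The main step is to verify that the domain of $R$ is cofinal in the Turing degrees, and this is where the only real work lies. Given an arbitrary real $z$, the naive choice $x = z$, $y = f(z)$ can fail, because hyp-regressivity only gives $f(z) \le_H z$ rather than $f(z) \le_T z$. The remedy is to replace $z$ by $x := z \oplus f(z)$. Since $f(z) \le_H z$, we have $x \equiv_H z$, and by hyp-invariance of $f$ this yields $f(x) \equiv_H f(z)$. Setting $y := f(z)$, we then have $y \le_T x$ trivially and $y \equiv_H f(x)$ by the previous line, so $(x, y) \in R$, and $x \ge_T z$, as needed.

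With both hypotheses of Lemma~\ref{lemma-computableuniformization} verified, I obtain a pointed perfect tree $T$ and a Turing functional $\Phi$ such that for every $x \in [T]$, $\Phi(x)$ is total and $(x, \Phi(x)) \in R$; in particular $\Phi(x) \equiv_H f(x)$, which is exactly the conclusion of the lemma. The only substantive idea in the proof is the ``hyp-invariance upgrade'' trick of replacing $z$ by $z \oplus f(z)$ so that a hyp-equivalent copy of $f$ lies Turing-below the input; once this is in hand, the statement follows immediately from the determinacy-based uniformization lemma, and no further set-theoretic input is required.
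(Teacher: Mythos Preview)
Your proof is correct and follows essentially the same approach as the paper: define the relation $R(x,y)\iff y\le_T x$ and $y\equiv_H f(x)$, verify the hypotheses of Lemma~\ref{lemma-computableuniformization}, and apply it. The only difference is cosmetic---for cofinality the paper uses $x^{(\alpha)}$ (for $\alpha$ with $x^{(\alpha)}\ge_T f(x)$) in place of your $z\oplus f(z)$, but both witnesses work by the same ``pass to a hyp-equivalent real that Turing-computes $f$ of the original'' idea.
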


\begin{proof}
Consider the following binary relation, $R$.
\[
  R(x, y) \iff x \geq_T y \text{ and } f(x) \equiv_H y.  
\]
The idea is that a computable function which is hyp-equivalent to $f$ is exactly a computable function which uniformizes $R$. To show that such a function exists, it suffices to check that we can apply Lemma \ref{lemma-computableuniformization}. 

To check that we can apply Lemma \ref{lemma-computableuniformization}, we need to check that the domain of $R$ is cofinal in the Turing degrees and that $R$ is a subset of Turing reducibility. The latter is clear from the definition of $R$. For the former, fix any real $x$ and we will show that some real which computes $x$ is in the domain of $R$. Since $f(x) \leq_H x$, there is some $\alpha < \omega_1^x$ such that $x^{(\alpha)} \geq_T f(x)$. Since $x^{(\alpha)} \equiv_H x$ and $f$ is hyp-invariant, $f(x^{(\alpha)}) \equiv_H f(x)$. Thus $R(x^{(\alpha)}, f(x))$ holds and so $x^{(\alpha)}$ is an element of the domain of $R$ which computes $x$.

Thus we may apply Lemma \ref{lemma-computableuniformization} to get a pointed perfect tree $T$ and a Turing functional $\Phi$ such that for all $x \in [T]$, $\Phi(x)$ is total and $\Phi(x) \equiv_H f(x)$.
\end{proof}

For the rest of the proof we will simply assume that $f$ is computable on a pointed perfect tree. It will also be convenient to assume that $f$ is injective on a pointed perfect tree, which we show next.

\begin{lemma}[$\ZF$]
Suppose $T$ is a pointed perfect tree and $f \colon 2^\omega \to 2^\omega$ is a hyp-invariant function which is computable on $[T]$. Then either $f$ is constant on a cone of hyperdegrees or $f$ is injective on a pointed perfect subtree of $T$.
\end{lemma}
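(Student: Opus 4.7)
The plan is to apply Lemma~\ref{lemma-thinning} directly to the Turing functional computing $f$ on $[T]$, and then handle the ``constant'' case by bootstrapping from a single pointed perfect subtree up to a full hyp-cone using hyp-invariance.

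In more detail: since $f$ is computable on $[T]$, Lemma~\ref{lemma-thinning} gives a pointed perfect subtree $S \subseteq T$ on which $f$ is either injective or constant. In the injective case we are done immediately. So suppose that $f$ takes a single constant value $c$ on every $x \in [S]$; we will show that in fact $f(x) \equiv_H c$ on the entire hyp-cone above $S$.

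The key observation is the following: if $x \geq_H S$, then setting $x' = x \oplus S$ we have $x' \geq_T S$ while still $x' \equiv_H x$ (since $x \leq_H x'$ trivially and $x' \leq_H x$ because $S \leq_H x$). Because $S$ is a pointed perfect tree, every Turing degree above the degree of $S$ is represented in $[S]$, so there is some $y \in [S]$ with $y \equiv_T x'$. Now $f(y) = c$ by choice of $S$, and by hyp-invariance applied twice we get $f(x) \equiv_H f(x') \equiv_H f(y) = c$. Hence $f$ is constant on the cone of hyperdegrees above $S$, which is the second alternative of the lemma.

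I do not anticipate any real obstacle: once Lemma~\ref{lemma-thinning} is invoked, the only nontrivial point is the passage from ``constant on $[S]$'' to ``constant on a hyp-cone,'' and the trick of replacing $x$ by $x \oplus S$ handles precisely the gap between hyp-reducibility and Turing-reducibility that the pointedness of $S$ requires.
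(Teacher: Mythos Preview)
Your proposal is correct and follows exactly the same route as the paper: apply Lemma~\ref{lemma-thinning} to split into the injective and constant cases, and in the constant case pass to a hyp-cone. The paper's proof leaves the passage from ``constant on $[S]$'' to ``constant on a cone of hyperdegrees'' entirely implicit, whereas you have spelled out the $x \mapsto x \oplus S$ argument that justifies it; your version is more detailed but not different in substance.
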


\begin{proof}
By Lemma \ref{lemma-thinning}, either $f$ is constant on a pointed perfect subtree of $T$ or $f$ is injective on a pointed perfect subtree of $T$. In the former case, $f$ is constant on a cone of hyperdegrees and in the latter case, we are done.
\end{proof}

For the rest of the proof, we will deal with the case of a hyp-invariant function, $f$, which is computable and injective on a pointed perfect tree, $T$. We will show that for any $x$ in $[T]$, $f(x) \ge_H x$. There are two cases: when $\omega_1^{f(x)} < \omega_1^x$ and when $\omega_1^{f(x)} = \omega_1^x$. We will show that the first case is impossible and that if we are in the second case then we can use the coding argument mentioned above.

\subsection{Proving that $f$ preserves $\omega_1^x$}

We will now show that for any $x \in [T]$, $\omega_1^{f(x)} = \omega_1^x$. We will do this by deriving a contradiction from the assumption that $\omega_1^{f(x)} < \omega_1^x$ (note that since $f(x) \leq_H x$ we cannot have $\omega_1^{f(x)} > \omega_1^x$). The basic idea is that in this case we can diagonalize against $f(x)$. Namely, we can use $\omega_1^{f(x)}$ jumps of $x$ to compute a real $y$ so that $f(x)$ cannot compute $f(y)$ with fewer than $\omega_1^{f(x)}$ jumps (and hence $f(x)$ cannot be hyp-equivalent to $f(y)$). Since $\omega_1^x > \omega_1^{f(x)}$, this $y$ can be made hyp-equivalent to $x$, which violates the hyp-invariance of $f$. We now give the formal proof.

\begin{lemma}[$\ZF$]
\label{lemma-case1}
Suppose $T$ is a pointed perfect tree and $f$ is a hyp-invariant function which is computable and injective on $[T]$. Then for every $x \in [T]$, $\omega_1^{f(x)} = \omega_1^x$.
\end{lemma}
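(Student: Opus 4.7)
I would proceed by contradiction. Suppose $\alpha := \omega_1^{f(x)} < \omega_1^x$ for some $x \in [T]$, and let $\Phi$ be the Turing functional computing $f$ on $[T]$. The plan is to construct $y \in [T]$ with $y \equiv_H x$ but $f(y) = \Phi(y) \not\leq_H f(x)$, contradicting the hyp-invariance of $f$.

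First I would observe that, since $\alpha < \omega_1^x$, the countable set $B$ of reals hyperarithmetic in $f(x)$ can be enumerated from a single jump of $x$: by admissibility of $\omega_1^x$ there is $\delta < \omega_1^x$ such that $x^{(\delta)}$ uniformly computes $\Psi(f(x)^{(r)})$ for every Turing functional $\Psi$ and every $f(x)$-computable well-order $r$ of length $< \alpha$. Write $B = \{b_0, b_1, b_2, \ldots\}$ for this enumeration. Next, since $\Phi$ is injective on $[T]$, Lemma~\ref{lemma-invertible} gives $\Phi(y) \oplus T \geq_T y$ for every $y \in [T]$, so for each $b_n$ there is at most one ``bad'' path $y_n^{*} \in [T]$ with $\Phi(y_n^{*}) = b_n$, and this $y_n^{*}$ is Turing-computable from $b_n \oplus T$, hence from $x^{(\delta)}$.

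Using $x^{(\delta)}$ as oracle I would then build $y \in [T]$ in stages along the branching structure of $T$. Starting at the root, at a diagonalization stage $2n$ I extend the current node $\sigma_{2n}$ through the least branching node of $T$ above it, picking the child of that node which does not lie below $y_n^{*}$; any path continuing through this child automatically has $\Phi$-value different from $b_n$. At an encoding stage $2n+1$ I again extend through the next branching node of $T$, picking the child according to the bit $x^{(\delta)}(n)$. The construction is $x^{(\delta)}$-computable, so $y \leq_T x^{(\delta)}$; conversely, from $y$ (which computes $T$ since $T$ is pointed) the sequence $c \in 2^\omega$ of branching choices of $y$ along $T$ is computable, and by construction $c(2n+1) = x^{(\delta)}(n)$, so $y \geq_T x^{(\delta)}$. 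Thus $y \equiv_T x^{(\delta)} \equiv_H x$. By hyp-invariance applied to $y \equiv_H x$ we obtain $f(y) \equiv_H f(x)$, and in particular $f(y) \leq_H f(x)$; but by construction $\Phi(y) \notin B$, i.e., $f(y) \not\leq_H f(x)$, the desired contradiction.

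The main technical obstacle is balancing the two competing demands on $y$: the diagonalization ensuring $\Phi(y) \notin B$, and the coding ensuring $y \equiv_H x$, all while staying inside the tree $T$ and keeping the construction $x^{(\delta)}$-computable. The crucial ingredient making this work is Lemma~\ref{lemma-invertible}: it upgrades the injectivity of $\Phi$ on $[T]$ to a genuine invertibility, which is what lets me explicitly compute---and hence explicitly avoid---the unique bad preimage $y_n^{*}$ of each $b_n$ at each diagonalization stage, leaving the branching choices at encoding stages free to transmit the bits of $x^{(\delta)}$.
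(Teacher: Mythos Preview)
Your argument is correct, but it takes a more laborious route than the paper's. The paper observes directly that if $\alpha = \omega_1^{f(x)} < \omega_1^x$, then \emph{every} $y \in [T]$ with $y \equiv_H x$ satisfies $y \le_T x^{(\alpha)}$: indeed $f(y) \equiv_H f(x)$ gives some $\beta < \alpha$ with $f(x)^{(\beta)} \ge_T f(y)$, and then $x^{(\alpha)} \ge_T f(x)^{(\beta)} \oplus T \ge_T f(y) \oplus T \ge_T y$ by Lemma~\ref{lemma-invertible}. Since $T$ is pointed, $[T]$ contains a path Turing-equivalent to $x^{(\alpha+1)}$, which is hyp-equivalent to $x$ but not below $x^{(\alpha)}$---an immediate contradiction, with no explicit construction needed.

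Your approach instead builds the witness $y$ by hand, interleaving diagonalization against an enumeration of the reals hyp in $f(x)$ with coding of $x^{(\delta)}$ into the branching choices. This is a perfectly natural idea and it works, but it requires more bookkeeping: you must actually produce the enumeration of $B$ (which, as you say, comes from the jump hierarchy on an $x$-computable copy of $\alpha$, plus a couple of extra jumps to decide totality), and you must handle the small wrinkle that the uniform inverse from Lemma~\ref{lemma-invertible} may behave arbitrarily when $b_n \notin \Phi([T])$ (harmless, since then either child avoids a nonexistent $y_n^*$, but worth saying). The paper's argument sidesteps all of this by using the invertibility lemma to get a \emph{global} Turing-degree upper bound on the hyp-equivalent paths, rather than as a tool for locating individual preimages to avoid one at a time.
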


\begin{proof}
Suppose for contradiction that for some $x \in [T]$, $\omega_1^{f(x)} < \omega_1^x$. Let $\alpha = \omega_1^{f(x)}$. The key point is that for every $y \in [T]$ which is hyp-equivalent to $x$, $x^{(\alpha)}$ computes $y$.

Why is that? Well, if $y$ is in the same hyperdegree as $x$ then $f(y)$ is in the same hyperdegree as $f(x)$. So by definition of $\alpha$, there is some $\beta < \alpha$ such that $f(x)^{(\beta)} \ge_T f(y)$. We then have the following calculation.
\begin{align*}
x^{(\alpha)} &\ge_T x^{(\beta)} &\text{because $\beta < \alpha$}\\
&\ge_T x^{(\beta)} \oplus T &\text{because $T$ is pointed}\\
&\ge_T f(x)^{(\beta)} \oplus T &\text{because $f(x) \le_T x$}\\
&\ge_T f(y)\oplus T &\text{by definition of $\beta$}\\
&\ge_T y &\text{by Lemma \ref{lemma-invertible}.}
\end{align*}

We can now finish the proof easily. Since $T$ is pointed, we can pick some $y \in [T]$ which is Turing equivalent to $x^{(\alpha + 1)}$. Since $\alpha < \omega_1^x$, this $y$ is hyp-equivalent to $x$. But it obviously is not computable from $x^{(\alpha)}$, so we have reached a contradiction.
\end{proof}

\subsection{Coding argument}

In this part of the proof, we will explain how to code $x$ into some real of the same hyperarithmetic degree as $f(x)$. The argument has some similarity to the proof of a basis theorem for perfect sets given by Groszek and Slaman in \cite{groszek1998basis} (which itself has some similarity to the coding argument used in \cite{slaman1988definable}). Before giving the coding argument, however, we will first show that for every $x \in [T]$ there is a uniform bound on the number of jumps that $f(x)$ takes to compute $f(y)$ for any $y \in [T]$ which is Turing equivalent to $x$.

\begin{lemma}[$\ZF$]
\label{lemma-bound}
Suppose $T$ is a pointed perfect tree, $f$ is a hyp-invariant function which is computable on $[T]$, and $x \in [T]$. Then there is some $\alpha < \omega_1^x$ such that if $y \in [T]$ is Turing equivalent to $x$ then $f(y) \le_T f(x)^{(\alpha)}$.
\end{lemma}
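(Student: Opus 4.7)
The plan is to apply $\Sigma^1_1$-bounding. For each $y \in [T]$ with $y \equiv_T x$, hyp-invariance and $y \equiv_H x$ give $f(y) \equiv_H f(x)$, so there is a least ordinal $\alpha_y$ with $f(y) \leq_T f(x)^{(\alpha_y)}$; by Lemma~\ref{lemma-case1}, $\omega_1^{f(x)} = \omega_1^x$, hence $\alpha_y < \omega_1^x$. The goal is to produce a single $\alpha < \omega_1^x$ dominating every $\alpha_y$.

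To that end I would consider the set $A$ of indices $e$ such that $\Phi_e(f(x))$ presents a linear order $r$ and there exist programs $\sigma,\tau,q$, a real $y$, and a jump hierarchy $H$ on $r$ starting with $f(x)$, witnessing $\Phi_\sigma(x) = y \in [T]$, $\Phi_\tau(y) = x$, and $\Phi_q(H) = f(y)$. Since $f$ is computable on $[T]$ and the only second-order quantifiers are over $y$ and $H$, this description is $\Sigma^1_1(x)$. Moreover every $\alpha_y$ is represented in $A$: take $r$ to be an $f(x)$-computable presentation of $\alpha_y$ (which exists since $\alpha_y < \omega_1^{f(x)}$), take $H = f(x)^{(\alpha_y)}$, and let $q$ code the Turing reduction $f(y) \leq_T f(x)^{(\alpha_y)}$. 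Provided every $e \in A$ codes a well-order, $\Sigma^1_1$-bounding then yields the desired $\alpha < \omega_1^{f(x)} = \omega_1^x$.

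The main obstacle is the proviso that every $e \in A$ codes a well-order: Harrison-style pseudo-well-orders relative to $f(x)$ also admit jump hierarchies $H$ starting with $f(x)$, and by Lemma~\ref{lemma-pseudowellordercomputes} any such $H$ computes every real hyperarithmetic in $f(x)$, in particular every $f(y)$ with $y \equiv_H x$, so they would satisfy the witness condition in $A$ and spoil the bounding argument. I would rule these out by additionally requiring that $H$ itself be hyperarithmetic in $f(x)$: for a well-order $r \leq_T f(x)$ of length $\alpha$ this is automatic since $H = f(x)^{(\alpha)}$, while for any ill-founded $r \leq_T f(x)$ Lemma~\ref{lemma-pseudowellordercomputes} (together with Lemma~\ref{lemma-harrisondefinable} to handle the case of a $\Delta^1_1(f(x))$ infinite descending sequence inside $r$) shows no such hyperarithmetic $H$ exists. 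Once this constraint is woven into $A$ at the correct $\Sigma^1_1$ level---the delicate technical step---$A$ becomes a $\Sigma^1_1(x)$ set of well-order codes, and the $\Sigma^1_1$-bounding theorem stated earlier in the preliminaries completes the proof.
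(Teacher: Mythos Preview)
Your overall strategy---define a $\Sigma^1_1(x)$ set of codes for $f(x)$-computable linear orders and apply $\Sigma^1_1$-bounding---matches the paper's, and you correctly identify the obstacle: pseudo-well-orders relative to $f(x)$ admit jump hierarchies that compute every $f(y)$, so they would contaminate your set $A$. But your proposed fix has a genuine gap. Requiring that $H$ be hyperarithmetic in $f(x)$ is a $\Pi^1_1$ condition on $H$, and placing a $\Pi^1_1$ clause inside the outer existential real quantifier $\exists H$ takes the definition out of $\Sigma^1_1$. You flag this as ``the delicate technical step'' but give no way to carry it out, and there is no standard trick for expressing ``$H \leq_H f(x)$'' at the $\Sigma^1_1$ level under an existential quantifier over $H$. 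So as written, your $A$ is either $\Sigma^1_1$ but contaminated by pseudo-well-orders, or clean but no longer $\Sigma^1_1$; either way bounding does not apply.

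The paper avoids this by reversing the polarity. Its set $A$ consists of those $e$ for which $\Phi_e(f(x))$ codes a linear order $r$ with (1) no $f(x)$-hyperarithmetic descending sequence, and (2) some $y \equiv_T x$ in $[T]$ and some jump hierarchy $H$ on $r$ starting from $f(x)$ such that $H$ does \emph{not} compute $f(y)$. Condition (1) is $\Sigma^1_1$ by Lemma~\ref{lemma-harrisondefinable} and (2) is plainly $\Sigma^1_1$, so $A$ is $\Sigma^1_1(x)$. Pseudo-well-orders are now excluded automatically: by Lemma~\ref{lemma-pseudowellordercomputes} every jump hierarchy on such an $r$ computes all of $\mathrm{HYP}(f(x))$, in particular every $f(y)$, so (2) fails. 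After bounding by $\alpha < \omega_1^x$, any $f(x)$-computable well-order of length $>\alpha$ must fail (2), which says precisely that its unique jump hierarchy computes every $f(y)$. The insight you are missing is this switch from ``$H$ computes $f(y)$'' to ``$H$ does not compute $f(y)$.''

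A minor aside: your appeal to Lemma~\ref{lemma-case1} for $\alpha_y < \omega_1^x$ is illegitimate here, since that lemma assumes $f$ is injective and the present lemma does not. It is also unnecessary: $f(y) \leq_H f(x)$ already gives $\alpha_y < \omega_1^{f(x)} \leq \omega_1^x$ directly.
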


\begin{proof}
The main idea is just to use $\Sigma^1_1$-bounding. Let $A$ be the set of programs $e$ such that $\Phi_e(f(x))$ computes a linear order $r$ for which
\begin{enumerate}
    \item $r$ has no infinite descending sequence which is hyperarithmetic in $f(x)$
    \item and there is some $y \equiv_T x$ in $[T]$ and some jump hierarchy $H$ on $r$ starting from $f(x)$ such that $H$ does not compute $f(y)$.
\end{enumerate}
By Lemma \ref{lemma-harrisondefinable} (and since $f(x)$ is computable from $x$), $A$ is $\Sigma^1_1(x)$. I claim that every program in $A$ computes a well-order.

Suppose instead that $A$ contains a program $e$ computing an ill-founded order, $r$. Thus $r$ is a pseudo-well-order relative to $f(x)$. Since $e$ is in $A$, there must be some $y\equiv_T x$ in $[T]$ and some jump hierarchy on $r$ starting with $f(x)$ which does not compute $f(y)$. And since $f$ is hyp-invariant, we must have $f(x) \equiv_H f(y)$. But by Lemma \ref{lemma-pseudowellordercomputes}, any jump hierarchy on $r$ which starts with $f(x)$ computes everything in the hyperdegree of $f(x)$, and in particular $f(y)$. This is a contradiction, so all programs in $A$ must compute well-orders. 

Since $A$ is $\Sigma^1_1(x)$ and contains only programs computing well-orders, $\Sigma^1_1$-bounding implies that there is some $\alpha < \omega_1^x$ which bounds every well-order in $A$. This implies that for every $y \equiv_T x$ in $[T]$, $f(y)$ is computable from $f(x)^{(\alpha + 1)}$.
\end{proof}

We now come to the coding argument. As we have discussed, it replaces a different coding argument used by Slaman and Steel, and while their argument codes information into the relative growth rates of two fast-growing functions, ours codes information into the relative Kolmogorov complexities of initial segments of three reals (though the reader does not need to be familiar with Kolmogorov complexity to understand the proof below).

\begin{lemma}[$\ZF$]
\label{lemma-coding}
Suppose $T$ is a pointed perfect tree and $f$ is a hyp-invariant function which is computable and injective on $[T]$. Then $f(x) \ge_H x$ for all $x \in [T]$.
\end{lemma}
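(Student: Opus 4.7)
The plan is to fix $x \in [T]$ and argue $x \leq_H f(x)$. First I would apply Lemma \ref{lemma-bound} to obtain $\alpha < \omega_1^x$ such that $f(y) \leq_T f(x)^{(\alpha)}$ for every $y \in [T]$ with $y \equiv_T x$; by Lemma \ref{lemma-case1} we have $\alpha < \omega_1^{f(x)}$, so $f(x)^{(\alpha)} \leq_H f(x)$. It therefore suffices to compute $x$ from $f(x)^{(\beta)}$ for some $\beta < \omega_1^x$. In fact, since Lemma \ref{lemma-invertible} gives $f(x) \oplus T \geq_T x$, it is enough to compute $T$ itself from $f(x)^{(\beta)}$ together with the bits of $x$, which is what I would aim for.

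For the coding I would use that, since $T$ is pointed and perfect, there is a canonical threading operation sending any real $z$ that computes $T$ to an element $\pi(z) \in [T]$ with $\pi(z) \equiv_T z$. Using this I would build three reals $a, b, c \in [T]$, each Turing equivalent to $x$, whose construction interleaves the bits of $x$ with fixed padding so that for each $n$ the relative Kolmogorov complexities of the initial segments $a \restriction n$, $b \restriction n$, $c \restriction n$ encode the bit $x(n)$. The third real plays the role of a calibrating ``ruler'' that removes global offsets in the complexity measurements, much as the two fast-growing functions in Slaman and Steel's argument jointly calibrate the bits of $x$ in the Turing-degree case.

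Decoding then runs as follows. By Lemma \ref{lemma-bound}, $f(a), f(b), f(c) \leq_T f(x)^{(\alpha)}$, and Lemma \ref{lemma-invertible} converts these into $a, b, c$ themselves (modulo having $T$ available to $f(x)^{(\alpha)}$, which the coding can be arranged to supply by including $T$-information among the targets). Once $a, b, c$ are in hand, initial-segment Kolmogorov complexities are $\Pi^0_2$-computable from them, so a couple more jumps of $f(x)$ suffice to read off every $x(n)$, giving $x \leq_T f(x)^{(\beta)}$ for some $\beta < \omega_1^x$ and hence $x \leq_H f(x)$.

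The hard step will be designing the construction of $a, b, c$ so that all three simultaneously lie in $[T]$, are each Turing equivalent to $x$, and carry the prescribed complexity signature. Mixing $x$ too directly into their bits threatens to absorb the signal into the complexity baseline, while keeping $x$ too separate destroys Turing equivalence with $x$. I expect this to require a careful inductive construction exploiting the perfection of $T$ to provide enough branching room to fit the coding choices, and the pointedness of $T$ to enforce the required Turing equivalences automatically, in the spirit of the Groszek--Slaman basis theorem cited just before the lemma.
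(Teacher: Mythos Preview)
Your opening is right: invoke Lemma~\ref{lemma-bound} for $\alpha$, use Lemma~\ref{lemma-case1} to see $\alpha<\omega_1^{f(x)}$, and aim to build auxiliary reals $a,b,c\in[T]$ in the hyp-degree of $x$ so that $x$ is recoverable from $f(x)^{(\alpha+2)}\oplus f(a)\oplus f(b)\oplus f(c)$. That is exactly the paper's setup. The gap is in where you place the code and how you plan to read it.

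You propose to encode the bits of $x$ into the Kolmogorov complexities of $a\!\restriction\! n$, $b\!\restriction\! n$, $c\!\restriction\! n$, and then have the decoder recover $a,b,c$ from $f(a),f(b),f(c)$ via Lemma~\ref{lemma-invertible} before reading off those complexities. But Lemma~\ref{lemma-invertible} gives $a\le_T f(a)\oplus T$, and the decoder does not have $T$; indeed, if it did, Lemma~\ref{lemma-invertible} applied to $f(x)$ itself would already give $x\le_T f(x)\oplus T$ and there would be nothing to prove. Your parenthetical ``which the coding can be arranged to supply by including $T$-information among the targets'' is exactly the problem you are trying to solve, so the argument is circular. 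A second issue is that the complexities of initial segments of $a,b,c$ are not freely at your disposal: membership in $[T]$ and Turing equivalence with $x$ already pin down those complexities up to an additive constant determined by $T$ and $x$, so there is no room to impose an independent signal there.

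The fix, which is what the paper does, is to code directly into quantities the decoder can see without $T$: namely, the least indices $e$ with $\Phi_e(f(x)^{(\alpha)})=f(a_n)$ (and similarly for $b_n,c_n$), for a sequence of \emph{targets} $a_n,b_n,c_n\in[T]$ chosen Turing equivalent to $x$ at each stage. Injectivity of $f$ on $[T]$ and the abundance of such targets let the coder force these indices to be large or small at will, encoding the next bit of $x$ into which of $e_{a,n}$, $e_{b,n}$ is larger. The third real rotates through a ``helper'' role, not as a calibrating ruler for complexities of $a,b,c$, but to tell the decoder how long an initial segment of $f(a)$ and $f(b)$ to inspect in order to identify $e_{a,n}$ and $e_{b,n}$ correctly. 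All of this is readable from $f(x)^{(\alpha+2)}$ together with $f(a),f(b),f(c)$, with no appeal to $T$.
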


\begin{proof}
Let $x \in [T]$. Our goal is to show that $f(x) \ge_H x$. By Lemma \ref{lemma-bound}, there is some $\alpha < \omega_1^x$ such that for all $y \in [T]$ in the same Turing degree as $x$, we have $f(x)^{(\alpha)} \ge_T f(y)$. By Lemma \ref{lemma-case1}, we know that $\omega_1^x = \omega_1^{f(x)}$ and thus $\alpha < \omega_1^{f(x)}$. By thinning $T$, we may assume that $x$ is the base of $T$---i.e.\ for every $y \in [T]$, $x \leq_H y$. This assumption is useful because it means that if we construct an element $y \in [T]$ which is hyperarithmetic in $x$ then we can be sure that $x$ and $y$ are actually hyp-equivalent. We will use this below without further comment.

To complete the proof, we will find reals $a, b, c \in [T]$ which are hyp-equivalent to $x$ such that $x \le_T f(x)^{(\alpha + 2)} \oplus f(a) \oplus f(b) \oplus f(c)$. To see why this is sufficient, simply note that by hyp-invariance of $f$ and the fact that $\alpha < \omega_1^{f(x)}$, $f(x)$ is hyp-equivalent to $f(x)^{(\alpha + 2)} \oplus f(a) \oplus f(b) \oplus f(c)$.

We will construct $a, b,$ and $c$ in stages, using the method of finite extensions. That is, on stage $n$ of the construction we will construct finite initial segments $A_n, B_n,$ and $C_n$ of $a, b,$ and $c$, respectively. 

To ensure that the reals $a, b,$ and $c$ are in $[T]$, we will make sure that on each step, each of the initial segments we have constructed so far are also initial segments of \emph{some} path through $T$. More precisely, we will ensure that on stage $n$, there are reals $a_n, b_n,$ and $c_n$ in $[T]$ such that $A_n$ is an initial segment of $a_n$, and similarly for $B_n$ and $C_n$. Thus the reals $a, b,$ and $c$ we construct are each a limit of elements of $[T]$ and hence themselves elements of $[T]$. For reasons that will be explained more fully below, we will also require that $a_n, b_n,$ and $c_n$ are all computable from $x$. We will refer to $a_n, b_n,$ and $c_n$ as the \term{stage $n$ targets} for $a, b,$ and $c$.

To ensure that $a, b,$ and $c$ are all hyp-equivalent to $x$, we will make sure that the entire construction is hyperarithmetic from $x$. This ensures that $a, b, c \leq_H x$; since each of $a, b,$ and $c$ is in $[T]$, they are all hyp above $x$ and hence all hyp-equivalent to $x$.

To help explain the construction, we will use a metaphor. Imagine there are two people, who we will call the \term{coder} and the \term{decoder}, locked in separate rooms. The coder has access to $x$ and needs to communicate it to the decoder. In order to do so, the coder can construct $a, b,$ and $c$ (in a hyperarithmetic way). The decoder, who has access to $f(x)^{(\alpha + 2)}$, is then give $f(a), f(b),$ and $f(c)$ and is tasked with recovering $x$ (in a computable way, though even doing it in a hyperarithmetic way would be enough). In order to help the decoder do this, the coder needs to not just encode the bits of $x$, but also enough information to help the decoder figure out how to carry out the decoding process.

\medskip\noindent\textbf{Overview.} Before describing the coding and decoding processes in more detail, we will give an overview of what happens on each step. Suppose we are at stage $n$. By the end of the stage, the coder must pick the following.
\begin{itemize}
    \item Finite strings $A_n, B_n,$ and $C_n$ in $T$ which will be initial segments of $a, b,$ and $c$. These should extend whatever strings the coder picked on the previous step.
    \item Reals $a_n, b_n,$ and $c_n$ in $[T]$ which are all Turing equivalent to $x$ and such that $A_n$ is an initial segment of $a_n$ and similarly for $B_n$ and $C_n$.
\end{itemize}
The coder will also keep track of indices for programs $e_{a, n}, e_{b, n},$ and $e_{c, n}$ where $e_{a, n}$ is the least index of a program computing $f(a_n)$ from $f(x)^{(\alpha)}$ and similarly for $e_{b, n}$ and $e_{c, n}$. Recall that we stated that $a_n, b_n,$ and $c_n$ are Turing equivalent to $x$ and thus by our choice of $\alpha$, $f(a_n), f(b_n),$ and $f(c_n)$ are all computable from $f(x)^{(\alpha)}$. Therefore $e_{a, n}, e_{b, n},$ and $e_{c, n}$ are all well-defined. The idea is that the $n^\text{th}$ bit of $x$ will be encoded into these indices.

Actually, on each step of the coding process, the coder will encode the next bit of $x$ using only two of $a, b,$ and $c$ and the third will play a ``helper'' role, coding some auxiliary information to help the decoder carry on the decoding process. Which of $a, b,$ and $c$ is playing this helper role will simply rotate between them on each step. So, for example, $a$ will play the helper role on every third step.

Suppose that $c$ is playing the helper role on stage $n$. Then the coder will encode the $n^\text{th}$ bit of $x$ into the relative sizes of $e_{a, n}$ and $e_{b, n}$. In particular, if the $n^\text{th}$ bit is $0$ then the coder will arrange that $e_{a, n} > e_{b, n}$ and if the $n^\text{th}$ bit is $1$ then the coder will arrange that $e_{a, n} < e_{b, n}$. In order to help the decoder determine $e_{a, n}$ and $e_{b, n}$, the coder will use their ability to control initial segments of $f(a), f(b),$ and $f(c)$, which they can do because $f$ is continuous on $[T]$ and they can control initial segments of $a, b,$ and $c$.

Still supposing that $c$ is playing the helper role on stage $n$, let's consider what the decoder should do. In order to decode the next bit of $x$, the decoder must determine $e_{a, n}$ and $e_{b, n}$ (they do not need to find $e_{c, n}$). To do so, they will use $e_{c, n - 1}$ (which is what we mean when we say that $c$ plays a helper role on this step). Thus it is essential that the decoder has correctly determined $e_{c, n - 1}$, which is why the helper role rotates between $a, b,$ and $c$. In other words, on every step, one of the two indices that the decoder has correctly determined at the end of the step will be used to help the decoder carry out the subsequent step.

\medskip\noindent\textbf{Decoding process.} 
We will now explain the coding and decoding processes in more detail. We will begin with the decoding process. Suppose that the decoder has finished stage $n$ and is at the beginning of stage $n + 1$. For the sake of concreteness, we will assume that $c$ is playing the helper role on stage $n + 1$. We will also make the inductive assumption that the decoder correctly determined $e_{c, n}$ on stage $n$. The decoder now does the following.
\begin{enumerate}
    \item First, the decoder looks for the first place where $f(c)$ disagrees from $\Phi_{e_{c, n}}(f(x)^{(\alpha)})$, i.e. the least $m$ such that $f(c)$ and $\Phi_{e_{c, n}}(f(x)^{(\alpha)})$ disagree on their $m^\text{th}$ bit but agree on all earlier bits. Recall that since we are assuming the decoder has correctly determined $e_{c, n}$, $\Phi_{e_{c, n}}(f(x)^{(\alpha)}) = f(c_n)$.
    \item Next, the decoder tries to determine $e_{a, n + 1}$ and $e_{b, n + 1}$. To do so, they search for the least index $e$ such that $\Phi_e(f(x)^{(\alpha)})$ is total and agrees with $f(a)$ on the first $m$ bits and concludes that this index $e$ is $e_{a, n + 1}$. They then find $e_{b, n + 1}$ in a similar manner. Note that since the decoder has access to $f(x)^{(\alpha + 2)}$, they can determine which programs are total when given $f(x)^{(\alpha)}$ as an oracle.
    \item Finally, the decoder tries to decode the $(n + 1)^\text{th}$ bit of $x$. To do so, they simply check which of the values $e_{a, n + 1}$ and $e_{b, n + 1}$ is larger than the other.
\end{enumerate}
In other words, the decoder determines $e_{a, n + 1}$ and $e_{b, n + 1}$ by finding the first programs which agree with $f(a)$ and $f(b)$ on certain long initial segments (and are total). To decide which initial segments of $f(a)$ and $f(b)$ to use, the decoder uses $f(c)$ and $e_{c, n}$.

\medskip\noindent\textbf{Coding process.} 
We will now explain what the coder should do on each step. Suppose the coder has just finished stage $n$ and is at the beginning of stage $n + 1$. For concreteness, we will again assume that $c$ plays the helper role on stage $n + 1$. The coder should do the following.
\begin{enumerate}
    \item First, the coder encodes the next bit of $x$ by picking new targets $a_{n + 1}$ and $b_{n + 1}$ for $a$ and $b$. For example, if the coder wants to arrange that $e_{a, n + 1} < e_{b, n + 1}$ then they may simply let $a_{n + 1} = a_n$ (so $e_{a, n} = e_{a, n + 1}$) and pick some $b_{n + 1} \in [T]$ which is Turing equivalent to $x$, has $B_n$ as an initial segment and for which the index of the least program computing $f(b_{n + 1})$ from $f(x)^{(\alpha)}$ is larger than $e_{a, n}$. Such a $b_{n + 1}$ must exist since the tree $T$ has infinitely many paths which are Turing equivalent to $x$ and which begin with $B_n$ and since $f$ is injective on $[T]$. The case in which the coder wants to arrange that $e_{a, n + 1} > e_{b, n + 1}$ is similar.
    \item Next, the coder finds a number $m$ large enough that for every $e < e_{a, n + 1}$, $\Phi_e(f(x)^{(\alpha)})$ is either not total or disagrees with $f(a_{n + 1})$ below $m$ and similarly for $e_{b, n + 1}$ and $f(b_{n + 1})$. In other words, looking only at the length $m$ initial segments of $f(a_{n + 1})$ and $f(b_{n + 1})$ is enough to determine $e_{a, n + 1}$ and $e_{b, n + 1}$. The point is that $m$ is large enough that the decoder only has to check the length $m$ initial segments of $f(a)$ and $f(b)$ to correctly determine $e_{a, n + 1}$ and $e_{b, n + 1}$.
    \item The coder now picks a new target $c_{n + 1}$ for $c$ such that $f(c_{n + 1})$ disagrees with $f(c_n)$, but the first point of disagreement is larger than $m$. To see why this is possible, note that since $f$ is continuous on $[T]$, we can take a long enough initial segment of $c_n$ so that for any real $ y \in [T]$ which agrees with $c_n$ on that initial segment, $f(y)$ and $f(c_n)$ will agree on their first $m$ bits. Furthermore, $[T]$ contains infinitely many paths which extend this initial segment and which are Turing equivalent to $x$. Since $f$ is injective on $[T]$, there must be some such path $y$ such that $f(y)$ disagrees with $f(c_n)$. We can take $c_{n + 1}$ to any such $y$.
    \item Let $m'$ be the first point of disagreement between $f(c_n)$ and $f(c_{n + 1})$. The coder now takes $C_{n + 1}$ to be a long enough initial segment of $c_{n + 1}$ to ensure that $f(c)$ must agree with $f(c_{n + 1})$ on the first $m'$ bits. To see why this is possible, simply note that $f$ is continuous on $[T]$.
    \item The coder now takes $A_{n + 1}$ and $B_{n + 1}$ to be initial segments of $a_{n + 1}$ and $b_{n + 1}$ long enough to ensure that $f(a)$ and $f(b)$ agree with $f(a_{n + 1})$ and $f(b_{n + 1})$ on the first $m'$ bits.
\end{enumerate}
Note that to carry out this coding process, we just need to be able to compute $f(x)^{(\alpha + 2)}$ (the $+ 2$ is needed to figure out which programs are total) and to compute $f$. Since $f$ is computable and $\alpha < \omega_1^x$, the whole process is therefore hyperarithmetic in $x$. Furthermore, it is easy to check that if the coder works as described above then the decoder will correctly determine the necessary indices on each step.
\end{proof}

\section{The Case of Borel Functions}
\label{section-borel}

It is popular to suppose that any proof of Martin's conjecture will only use determinacy in a ``local'' way---that is, the proof will still work in $\ZF$ when restricted to Borel functions, just by replacing the original uses of $\AD$ with analogous uses of Borel determinacy.

In this section, we will see that the main result of this paper does hold in $\ZF$ when restricted to Borel functions, but that proving this requires using a trick not present in the $\AD$ proof presented above. The trouble is that even if we only consider Borel functions, the proof of Lemma \ref{lemma-replacef} appears to require analytic determinacy rather than Borel determinacy. However, this can be avoided by a more careful analysis and an appeal to $\Sigma^1_1$-bounding.

Here's the key idea. If $f$ is hyp-regressive then we know that for each $x$ there is some $\alpha < \omega_1^x$ such that $x^{(\alpha)}$ computes $f(x)$. We will use $\Sigma^1_1$-bounding to find a single $\alpha$ which works for all $x$. After this, it will be straightforward to modify the proof of Lemma \ref{lemma-replacef} to only use Borel determinacy. 

In the next lemma we will prove this key point. Note that since we are restricting ourselves to Borel functons, we can drop the ``hyp-regressive'' requirement---every Borel function $f$ is automatically regressive on a cone of hyperarithmetic degrees.

\begin{lemma}[$\ZF$]
\label{lemma-boundalpha}
Let $f \colon 2^\omega \to 2^\omega$ be a Borel function. Then there is some $\alpha < \omega_1$ such that for all $x$ on a cone of hyperdegrees, $\alpha < \omega_1^x$ and $x^{(\alpha)} \ge_T f(x)$.
\end{lemma}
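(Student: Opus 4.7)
The plan is to invoke the uniform hyperarithmetic representation of Borel functions recorded at the end of Section 2.1. Since $f$ is Borel, that fact provides a countable ordinal $\beta$, a real $y_0$, a real $r \le_T y_0$ coding a presentation of $\beta$, and a Turing functional $\Psi$ such that $f(x) = \Psi((x \oplus y_0)^{(\beta)})$ for every $x \in 2^\omega$, where $(x \oplus y_0)^{(\beta)}$ denotes the unique jump hierarchy on $r$ starting from $x \oplus y_0$. I would set $\alpha := \beta$ and restrict attention to the cone above $y_0$.

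To verify the conclusion on this cone, let $x \ge_T y_0$. Then $r \le_T y_0 \le_T x$, so $r$ is a presentation of $\beta$ computable from $x$, giving $\beta < \omega_1^x$. Also $x \oplus y_0 \equiv_T x$, so the jump hierarchy on $r$ starting from $x \oplus y_0$ is Turing-equivalent to $x^{(\beta)}$. Therefore $x^{(\beta)} \ge_T \Psi((x \oplus y_0)^{(\beta)}) = f(x)$, as required.

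The $\Sigma^1_1$-bounding flavor emphasized in the section's preamble is implicit in this proof, because the uniform representation theorem is itself proved via $\Sigma^1_1$-bounding. A more self-contained alternative would fix a real $y_0$ with $f$ being $\Delta^1_1(y_0)$, assemble a $\Sigma^1_1(y_0)$ set of presentations of linear orders with no $y_0$-hyperarithmetic descending sequence that ``witness'' the failure of a given countable ordinal to bound the jump level needed to compute $f(x)$ uniformly, use Lemmas~\ref{lemma-harrisondefinable} and \ref{lemma-pseudowellordercomputes} to argue every such presentation is actually a well-order, and invoke $\Sigma^1_1$-bounding to extract an ordinal $\alpha < \omega_1^{y_0} < \omega_1$. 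The main obstacle in that direct route is defining the $\Sigma^1_1$ set so that it captures failures uniformly across all $x$ on the cone while keeping $y_0$ as the fixed parameter for the pseudo-well-order argument — a bookkeeping issue that the uniform representation theorem already handles internally, which is why the cleanest path is simply to quote it.
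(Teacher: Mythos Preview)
Your proof is correct, and it is a genuinely different route from the paper's. You simply invoke the uniform hyperarithmetic representation of Borel functions already recorded in Section~2.1, read off $\alpha = \beta$, and restrict to the cone above the parameter $y_0$; this is the cleanest possible argument given what the paper has already stated. The paper instead reproves the needed bound from scratch by a direct $\mathbf{\Sigma^1_1}$-bounding argument: it forms the set $A$ of presentations $r$ of linear orders for which \emph{some} $x$ computes $r$, $r$ has no $x$-hyperarithmetic descending sequence, and yet some jump hierarchy on $r$ starting from $x$ fails to compute $f(x)$; it then uses Lemmas~\ref{lemma-harrisondefinable} and~\ref{lemma-pseudowellordercomputes} to show $A$ contains only well-orders, and applies boldface $\Sigma^1_1$-bounding. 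Note in particular that the paper does \emph{not} fix a parameter $y_0$ as in your sketched alternative---the ``bookkeeping issue'' you worried about is handled by making conditions (1)--(3) all relative to the varying $x$ and letting the Borel code of $f$ serve as the boldface parameter. Your approach is shorter and perfectly adequate; the paper's approach has the virtue of being self-contained and of exactly paralleling the proof of Lemma~\ref{lemma-bound}, so the reader sees the same pseudo-well-order trick twice. One small point that applies to both arguments: as literally written, each lands on a Turing cone (above $y_0$ for you, above a presentation of $\alpha$ for the paper), whereas the lemma promises a cone of hyperdegrees; this is harmless for the application in the next theorem, and in the paper's version one can pass to the hyper cone because the conclusion actually holds on the hyp-invariant set $\{x : \omega_1^x > \alpha\}$.
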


\begin{proof}
As noted above, since $f$ is Borel, $f(x) \le_H x$ on a cone of hyperdegrees. For the rest of the proof, we will implicitly work on this cone and thus we may assume $f(x) \le_H x$ for all $x$.

We start by simply writing down the definition of hyperarithmetic reducibility: for each $x$, we know that $f(x) \le_H x$ and hence that there is some $\alpha < \omega_1^x$ such that $x^{(\alpha)}$ computes $f(x)$. Our goal is to show that there is some $\alpha < \omega_1$ which is large enough to work for all $x$. We will do so by using $\Sigma^1_1$-bounding.

Let $A$ be the set of reals $r$ which code presentations of linear orders such that for some $x$,
\begin{enumerate}
    \item $x$ computes $r$
    \item $r$ has no infinite descending sequences which are hyperarithmetic in $x$
    \item and there is a jump hierarchy $H$ on $r$ starting from $x$ such that $H$ does not compute $f(x)$.
\end{enumerate}
By Lemma \ref{lemma-harrisondefinable} plus the fact that $f$ is Borel, the set $A$ is $\mathbf{\Sigma^1_1}$ definable (note that this is boldface rather than lightface because $f$ is Borel but not necessarily lightface $\Delta^1_1$).

Next, I claim that $A$ only contains well-orders. Suppose not and that $A$ contains an ill-founded order, $r$. Let $x$ witness that $r$ is in $A$. Then $r$ is a pseudo-well-order relative to $x$. But by Lemma \ref{lemma-pseudowellordercomputes}, this means that any jump hierarchy on $r$ starting with $x$ computes everything hyperarithmetic in $x$, and in particular, computes $f(x)$. This contradicts the definition of $A$.

Since $A$ is $\mathbf{\Sigma^1_1}$ and contains only well-orders, $\mathbf{\Sigma^1_1}$-bounding implies that there is some $\alpha < \omega_1$ which bounds everything in $A$. By the definition of $A$ this means that for every $x$ either $\omega_1^x \le \alpha$ or $x^{(\alpha + 1)} \ge_T f(x)$. So if we go to a cone on which everything computes a presentation of $\alpha$ then we obtain the conclusion of the lemma.
\end{proof}

We can now prove the Borel version of Theorem \ref{thm-main}.

\begin{theorem}[$\ZF$]
Let $f \colon 2^\omega \to 2^\omega$ be a hyp-invariant Borel function. Then either $f$ is constant on a cone of hyperdegrees or $f(x) \ge_H x$ on a cone of hyperdegrees.
\end{theorem}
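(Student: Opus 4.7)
The plan is to re-run the proof of Theorem \ref{thm-main} while replacing every invocation of $\AD$ by Borel determinacy. Inspecting that proof, determinacy is used only once, in Lemma \ref{lemma-replacef}, so the only real task is to find a Borel substitute for the relation used there. The relation in question, $R(x,y)\iff x\ge_T y$ and $f(x)\equiv_H y$, is $\mathbf{\Sigma^1_1}$ rather than Borel, because hyp-equivalence quantifies implicitly over computable ordinals. Lemma \ref{lemma-boundalpha} is exactly the tool needed to collapse this implicit quantification down to a single concrete jump level.

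Concretely, apply Lemma \ref{lemma-boundalpha} to obtain an ordinal $\alpha<\omega_1$ and a cone base $c$, which we may enlarge so that $c$ also computes a fixed presentation $r$ of $\alpha$, such that $f(z)\le_T z^{(\alpha)}$ whenever $z\ge_H c$. Define the relation
\[
R(x,y) \iff x \text{ is the jump hierarchy on } r \text{ starting from some } z\ge_T c, \text{ and } y=f(z).
\]
Given $x$, the real $z$ is recoverable as the $0_r$-th column of $x$, and the verification that $x$ really is the jump hierarchy on $r$ over $z$ is arithmetic in $x\oplus c$. Since $f$ is Borel, $R$ is Borel. By Lemma \ref{lemma-boundalpha}, $R$ is contained in $\ge_T$, and its domain is cofinal in the Turing degrees because for any $w$ the real $(w\oplus c)^{(\alpha)}$ lies in the domain and computes $w$.

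Now invoke the Borel form of Lemma \ref{lemma-computableuniformization} (which, as noted after that lemma, is provable in $\ZF$) to obtain a pointed perfect tree $T$ and a Turing functional $\Phi$ such that $\Phi(x)=f(z(x))$ for every $x\in[T]$, where $z(x)$ denotes the $0_r$-th column of $x$. Because $z(x)\equiv_H x$ (as $\alpha<\omega_1^{z(x)}$) and $f$ is hyp-invariant, $\Phi(x)\equiv_H f(x)$ throughout $[T]$. In particular $\Phi$ is a Turing functional which is hyp-invariant on $[T]$ and serves as the desired computable replacement for $f$.

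From here the rest of the proof of Theorem \ref{thm-main} transfers essentially verbatim and uses no further determinacy. Lemma \ref{lemma-thinning} makes $\Phi$ either constant or injective on a pointed perfect subtree. In the constant case, $f$ is constant on a cone of hyperdegrees. In the injective case, Lemmas \ref{lemma-case1} and \ref{lemma-coding}, applied to $\Phi$, give $\Phi(x)\ge_H x$ on a cone, and hence $f(x)\ge_H x$ on a cone via $\Phi(x)\equiv_H f(x)$. The only step that required any thought was arranging $R$ to be Borel; everything downstream is routine.
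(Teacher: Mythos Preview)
Your proof is correct and follows essentially the same strategy as the paper: use Lemma~\ref{lemma-boundalpha} to cut the non-Borel relation in Lemma~\ref{lemma-replacef} down to a Borel one, invoke the Borel form of Lemma~\ref{lemma-computableuniformization}, and then run Lemmas~\ref{lemma-thinning}, \ref{lemma-case1}, \ref{lemma-bound}, and \ref{lemma-coding} unchanged. The only differences are cosmetic: the paper's Borel relation $S(x,z)\iff x\ge_T z\land \exists y\le_T x\,(y\ge_T a\land x\le_T y^{(\alpha)}\land f(y)=z)$ has a larger domain than your set of literal jump hierarchies (both are cofinal, so either works), and as a minor slip the original relation is $\mathbf{\Pi^1_1}$ rather than $\mathbf{\Sigma^1_1}$, since $\equiv_H$ is $\Pi^1_1$.
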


\begin{proof}
By the previous lemma, we can assume there is some $\alpha < \omega_1$ such that for all $x$ on a cone of hyperdegrees, $\alpha < \omega_1^x$ and $x^{(\alpha)} \ge_T f(x)$. Let $a$ be the base of such a cone and let $r$ be a presentation of $\alpha$ computable from $a$. For the rest of the proof, we will work on the cone above $a$ and we will interpret $x^{(\alpha)}$ to mean the unique jump hierarchy on $r$ that starts with $x$.

The main idea of the proof is to go through the proof of Theorem \ref{thm-main} and make sure that every time that proof used determinacy, we can actually get by with just Borel determinacy. The only part of that proof in which we used determinacy was in the proof of Lemma \ref{lemma-replacef}. In particular, we used determinacy by applying Lemma \ref{lemma-computableuniformization} to the binary relation $R$ defined by
\[
R(x, y) \iff x \ge_T y \text{ and } f(x) \equiv_H y.
\]

The problem is that even if $f$ is Borel, this relation is not $\mathbf{\Delta^1_1}$, but only $\mathbf{\Pi^1_1}$ (since, in general, the formula $x \equiv_H y$ is only $\Pi^1_1$). We will remedy this problem by showing that the relation $R$ can be replaced by the relation $S$ defined by
\[
S(x, z) \iff x \ge_T z \text{ and } \exists y \le_T x\, (y \ge_T a \land x \le_T y^{(\alpha)} \land f(y) = z).
\]
In particular, we will show that the domain of $S$ is cofinal in the Turing degrees. The requirement that $y$ must compute $a$ is necessary to ensure that $y^{(\alpha)}$ is well-defined (and note that it implies that $x$ must also compute $a$).

\medskip\noindent\textbf{Why is this sufficient?} Let's first assume that we can show that the domain of $S$ is cofinal and see why that is enough to complete the proof. Since the definition of $S$ is $\mathbf{\Delta^1_1}$, and satisfies the conditions of Lemma \ref{lemma-computableuniformization}, there is a pointed perfect tree $T$ and a Turing functional $\Phi$ such that for all $x \in [T]$, $S(x, \Phi(x))$ holds. 

We now claim that $\Phi(x) \equiv_H f(x)$. To see why, let $y$ be a witness to the truth of $S(x, \Phi(x))$. Then $y \geq_T a$ and so $\alpha < \omega_1^y$. Also $y \leq_T x$ and $x \leq_T y^{(\alpha)}$, hence $x$ and $y$ are hyp-equivalent. Since $f$ is hyp-invariant, this implies that $\Phi(x) = f(y) \equiv_H f(x)$.

Thus we have recovered the conclusion of Lemma \ref{lemma-replacef} and the rest of the proof works unchanged. 

\medskip\noindent\textbf{Why is this true?} Now we will show that $S$ has cofinal domain. The proof is very similar to the proof of lemma \ref{lemma-replacef}. Let $x$ be any real. By joining with $a$ if necessary, we may assume that $x$ is in the cone above $a$. Since $x$ is in the cone above $a$, we know that $f(x) \le_T x^{(\alpha)}$ and so $S(x^{(\alpha)}, f(x))$ holds (as witnessed by $x$ itself). Since $x \le_T x^{(\alpha)}$, we have succeeded in finding something in the domain of $S$ which is above $x$.
\end{proof}

\bibliographystyle{plain}
\bibliography{bibliography}

\end{document}